\newtheorem{theorem}{Theorem}
\theoremstyle{plain}
\newtheorem{corollary}{Corollary}
\newtheorem{lemma}{Lemma}
\newtheorem{proposition}{Proposition}
\newtheorem{remark}{Remark}
\numberwithin{equation}{section}
\begin{document}
\title[Steiner $(k,l)$-eccentricity]{Some sharp bounds on the average Steiner $(k,l)$-eccentricity for trees}
\author[Cheng ZENG]{Cheng ZENG}
\address{Shandong Technology and Business University}
\email{czeng@sdtbu.edu.cn}
\author[Gengji LI]{Gengji LI}
\address{Shandong Technology and Business University}
\email{lgengji@gmail.com}
\date{}
\subjclass[2020]{05C05, 05C09}
\keywords{Steiner distance; Steiner eccentricity; Broom; Starlike tree; Caterpillar}
%\dedicatory{Dedicated to the memory of S. Bach.}
\thanks{Cheng Zeng is the corresponding author.}

\begin{abstract}
The Steiner $(k,l)$-eccentricity of a vertex $l$-set $S$ is the maximum Steiner $k$-distance among all vertex sets containing  $S$. In this paper we introduce some transformations for trees that do not increase the average Steiner $(k,l)$-eccentricity for all $0\leq l\leq k\leq n$. Using these transformations, we obtain some sharp bounds on the average Steiner $(k,l)$-eccentricity for trees with some certain conditions, including given nodes, given diameter, given maximum degree and given leaves, and get the corresponding extremal trees as well.
\end{abstract}

\maketitle

\section{Introduction}
Throughout this paper we consider simple, undirected and connected graphs. Let $G=(V(G), E(G))$ be a graph with vertex set $V(G)$ and edge set $E(G)$. Let $\#A$ be the cardinality of set $A$. We call $\#V(G)$ and $\#E(G)$ the \textit{order} and the \textit{size} of graph $G$, respectively.  We always assume $\#V(G)=n$. We say $A$ a $k$-set if $\#A=k$. Given a $k$-set $S\subseteq V(G)$, the size of minimal spanning tree of $S$ in $G$ is called the \textit{Steiner $k$-distance} of $S$ and we denote it by $d_G(S)$. Especially, if $S$ is a $2$-set $\{u,v\}$, then $d_G(S)=d_G(u,v)$ is the classical distance. 

In the past few decades, the Steiner distance and the Steiner eccentricity have witnessed a surge in research activities, see the surveys \cite{chartrand1989steiner, dankelmann1996average, ali2012upper, mao2017steiner, li2021average, dankelmann2021bounding}. The \textit{Steiner $(k,l)$-eccentricity} of $l$-set $S$ in graph $G$ is given by
\begin{equation*}
	\varepsilon_{k,l}(S;G)=\max\{d_G(S'): S\subseteq S'\subseteq V(G),  \#S'=k\geq\max\{2,l\}\}.
\end{equation*}
In cases  without confusion, we also denote it by $\varepsilon_{k,l}(S)$. In particular, $\varepsilon_2(v):=\varepsilon_{2,1}(v;G)$ and $\varepsilon_k(v):=\varepsilon_{k,1}(v;G)$  are the \textit{eccentricity} and the \textit{Steiner $k$-eccentricity} of $v$ in $G$; $\operatorname{diam}_k(G):=\varepsilon_{k,0}(\emptyset)=\max_{S\subseteq V(G)}\varepsilon_{k,l}(S;G)$ is the \textit{Steiner $k$-diameter} of $G$;  $\operatorname{rad}_{k,l}(G):=\min_{S\subseteq V(G)}\varepsilon_{k,l}(S;G)$ is the \textit{Steiner $(k,l)$-radius} of $G$; $\varepsilon_{k,l}(S;G)=d_G(S)$ if $k=l$.

The \textit{average Steiner $(k,l)$-eccentricity} of $G$ is the arithmetic mean of the Steiner $(k,l)$-eccentricities of all $l$-sets in $V(G)$, that is, 
\begin{equation*}
	\bar{\varepsilon}_{k,l}(G)=\binom{n}{l}^{-1}\sum_{\#S=l, S\subseteq V(G)}\varepsilon_{k,l}(S;G).
\end{equation*}
The \textit{average eccentricity} and the \textit{average Steiner $k$-eccentricity} of $G$ are thus given by $\bar{\varepsilon}(G):=\bar\varepsilon_{2,1}(G)$ and $\bar{\varepsilon}_k(G):=\bar{\varepsilon}_{k,1}(G)$, respectively.

With respect to the definition of the Steiner distance, Li et al. \cite{li2016steiner} gave the concept of the \textit{Steiner $k$-Wiener index}, that is,
\begin{equation*}
	SW_k(G):=\sum_{S\subseteq V(G), \#S=k}d_G(S),~2\leq k\leq n-1.
\end{equation*}
They proved that
\begin{equation*}
	\binom{n-1}{k-1}(n-1)\leq SW_k(T) \leq \binom{n+1}{k+1}(k-1),
\end{equation*}
where the star $K_{1,n-1}$ and the path $P_n$ reach the lower and upper bounds, respectively. For a fixed $n$, since $\bar{\varepsilon}_{k,k}(G)=\binom{n}{k}^{-1}SW_k(G)$, we plainly have 
\begin{equation*}
	\bar\varepsilon_{k,k}(K_{1,n-1})\leq \bar\varepsilon_{k,k}(T)\leq\bar\varepsilon_{k,k}(P_n).
\end{equation*}

Very recently, Li  et al. \cite{ li2021average,li2021steiner} obtained the lower and upper bounds on the average Steiner $k$-eccentricity for trees of order $n$ and for all $k$, that is, 
\begin{equation*}
	\bar\varepsilon_k(K_{1,n-1})\leq \bar\varepsilon_k(T)\leq\bar\varepsilon_k(P_n).
\end{equation*}

In our previous research \cite{li2024average}, we derived 
\begin{equation*}
	\bar\varepsilon_{3,2}(K_{1,n-1})\leq \bar\varepsilon_{3,2}(T)\leq\bar\varepsilon_{3,2}(P_n)
\end{equation*}
by constructing some fascinating transformations. 

An interesting question is whether there are similar conclusions for the general Steiner $(k,l)$-eccentricity. In this paper we provide an affirmative response.  

The paper is organized as follows. In Section \ref{sec2}, we introduce some basic properties and notations for the Steiner $(k,l)$-eccentricity. In Sections \ref{sec3} and \ref{sec4}, we present two crucial transformations on trees, $\sigma$-transformation \cite{li2024average} and $(p,q)$-transformation \cite{ilic2012extremal}, which are monotonic with respect to the average Steiner $(k,l)$-eccentricity. For a more general characterization of the Steiner $(k,l)$-eccentricity, additional conditions have been incorporated into these transformations, please see Lemma \ref{lem1} and Theorem \ref{thm3} for details. In Section \ref{sec5}, we obtain some lower and upper bounds on the average Steiner $(k,l)$-eccentricity of trees with given order, given maximum degree, given number of leaves, given diameter in turn, and give the corresponding extremal trees as well. We also give some sharp bounds on the Steiner Wiener index for trees.
\section{Preliminary and basic properties}\label{sec2}
In this paper we consider the average Steiner $(k,l)$-eccentricity of tree $T$. We list some notations and definitions needed in this paper. The \textit{degree} of vertex $v$ in $G$, denoted by $\deg(v;G)$ ($\deg(v)$ for short), is the number of neighbors of $v$ in $G$. A vertex $v$ is a \textit{branching vertex} if $\deg(v)\geq3$. Let $\Delta(G)$ be the maximum degree of $G$. We call a vertex a \textit{leaf} if its degree is $1$. The set $\ell(T)$ consists of all leaves in $G$. We say an edge of $G$ a \textit{pendant edge} if it possesses a leaf. A path in $G$ is called a \textit{pendant path} if it contains a leaf and a branching vertex as endpoints, and each of its internal vertices has degree $2$.  The \textit{diameter} and the \textit{radius} of graph $G$, denoted by $\operatorname{diam}(G)$ and $\operatorname{rad}(G)$, are $\max_{v\in V(G)}\varepsilon_2(v)$ and $\min_{v\in V(G)}\varepsilon_2(v)$, respectively. A \textit{diametrical path}  of $G$ is a path in $G$ and of length $\operatorname{diam}(G)$.

We call $T_{k,l}(S)$ a \textit{Steiner $(k,l)$-tree} of $l$-set $S$ if there exists a $k$-set $S'$ containing $S$ such that the size of  $T_{k,l}(S)$, as the minimal spanning tree of $S'$, equals $\varepsilon_{k,l}(S;G)$.  Let $EV(T_{k,l}(S))=\{u: u\in S'\backslash S, S'\subseteq T_{k,l}(S)\}$ be the set of all eccentric vertices of $S$ in $T_{k,l}(S)$. For a fixed $S$, all possible Steiner $(k,l)$-trees of $S$ in $T$ form a set $\mathcal T_{k,l}(S)$.
\begin{proposition}\label{prop1}
	The eccentric vertices of $T_{k,l}(S)$ take leaves in $T$ with priority.
\end{proposition}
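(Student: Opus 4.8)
The plan is to argue by an exchange (push-to-leaf) argument, using the fact that in a tree the minimal spanning subtree of a vertex set is unique and grows in a controlled way when a vertex is adjoined. First I would record the elementary observation that for any $X\subseteq V(T)$ and any $y\in V(T)\setminus X$,
\begin{equation*}
	d_T(X\cup\{y\})=d_T(X)+\operatorname{dist}_T\bigl(y,\,\mathrm{ST}_T(X)\bigr),
\end{equation*}
where $\mathrm{ST}_T(X)$ denotes the unique minimal subtree of $T$ spanning $X$ and $\operatorname{dist}_T(y,\mathrm{ST}_T(X))$ is the length of the unique path from $y$ to its nearest vertex on $\mathrm{ST}_T(X)$; this holds because adjoining $y$ simply appends that path. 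I would also recall the standard fact that every leaf of the minimal spanning subtree $T_{k,l}(S)$ lies in the defining $k$-set $S'=S\cup EV(T_{k,l}(S))$, since a degree-one vertex of $T_{k,l}(S)$ belonging to neither $S$ nor $EV$ could be deleted to produce a strictly smaller spanning tree of $S'$.

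With these in hand, the core step is the exchange. Suppose, for contradiction, that some eccentric vertex $u\in EV(T_{k,l}(S))$ is a leaf of $T_{k,l}(S)$ but not a leaf of $T$. Then $\deg(u;T)\ge 2$, so $u$ has a neighbour $w$ in $T$ with $w\notin T_{k,l}(S)$ (otherwise the tree edge $uw$ would lie in $T_{k,l}(S)$, forcing $\deg(u;T_{k,l}(S))\ge 2$), and the component $R$ of $T-u$ containing $w$ is a subtree attached to the rest of $T$ only through $u$. Because $u$ is a leaf of $T_{k,l}(S)$, the subtree $R$ is disjoint from $T_{k,l}(S)$, hence $R$ contains no vertex of $S'$. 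Choosing a vertex $u^{\ast}$ of $R$ at maximum distance from $u$ yields a leaf $u^{\ast}$ of $T$ with $u^{\ast}\notin S'$, and setting $S''=(S'\setminus\{u\})\cup\{u^{\ast}\}$ gives $S''\supseteq S$ and $\#S''=k$. Since the unique path from $u^{\ast}$ to $\mathrm{ST}_T(S'\setminus\{u\})$ runs through the cut vertex $u$, the additive formula gives
\begin{equation*}
	d_T(S'')=d_T(S'\setminus\{u\})+\operatorname{dist}_T(u^{\ast},u)+\operatorname{dist}_T\bigl(u,\mathrm{ST}_T(S'\setminus\{u\})\bigr)=d_T(S')+\operatorname{dist}_T(u^{\ast},u)>d_T(S'),
\end{equation*}
contradicting the maximality defining $\varepsilon_{k,l}(S;T)$. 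Hence every eccentric vertex that is a leaf of $T_{k,l}(S)$ must already be a leaf of $T$, which is precisely the claimed priority.

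Finally I would remark that the eccentric vertices lying \emph{interior} to $T_{k,l}(S)$ are immaterial to $d_T(S')$ — by the additive formula they contribute distance $0$ — so they may be placed arbitrarily and need not be leaves; the content of the proposition concerns only those eccentric vertices that genuinely extend the spanning tree. The step I expect to demand the most care is verifying that the replacement vertex $u^{\ast}$ is genuinely new (i.e.\ $u^{\ast}\notin S'$) and that the $u^{\ast}$-to-$\mathrm{ST}_T(S'\setminus\{u\})$ path indeed passes through $u$; both follow from $u$ being a cut vertex separating the disjoint subtree $R$ from $T_{k,l}(S)$, but they must be spelled out to guarantee the strict inequality.
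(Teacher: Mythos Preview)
Your proof is correct and uses the same exchange idea as the paper: swap a non-leaf eccentric vertex for an unused leaf of $T$ to obtain a strictly larger spanning subtree of a $k$-set containing $S$, contradicting maximality. The paper's version is a two-line sketch that simply asserts one may ``set one of the remaining leaves as new $\omega$'' and obtain a larger tree, while you make the strict increase explicit via the additive formula and the specific choice of $u^{\ast}$ in the subtree hanging off $u$; your closing remark on eccentric vertices interior to $T_{k,l}(S)$ also cleanly isolates the residual case the paper leaves implicit.
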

\begin{proof}
Suppose that an internal vertex $\omega$ is one of the eccentric vertices of $T_{k,l}(S)$ and some leaves remain after we delete $EV(T_{k,l}(S))\cup S$ from $T$. Since $T$ is connected, we may set one of the remaining leaves as new $\omega$. Hence, a larger Steiner $(k,l)$-tree containing $S$ will be obtained, a contradiction.
\end{proof}
\begin{proposition}\label{prop2}
	If $n\geq k+1$ and $\#\ell(T)\leq k-l$, then $\varepsilon_{k,l}(S;T)=n-1$ for all $l$-sets $S$, and hence $\bar{\varepsilon}_{k,l}(T)=n-1$.
\end{proposition}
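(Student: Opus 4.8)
The plan is to pin down the largest value a Steiner $k$-distance can attain in a tree and then to show that the hypothesis $\#\ell(T)\le k-l$ always lets us realize that maximum while keeping the prescribed $l$-set $S$ inside the chosen $k$-set. The starting point is the trivial upper bound: for any $k$-set $S'$, the minimal spanning tree of $S'$ is a subtree of $T$, so $d_T(S')\le n-1$, with equality exactly when that spanning subtree is all of $T$. Hence $\varepsilon_{k,l}(S;T)\le n-1$ for every $l$-set $S$, and it remains only to produce a witnessing $k$-set.

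The conceptual heart of the argument is the claim that, in a tree, the minimal spanning tree of a vertex set $S'$ equals $T$ \emph{if and only if} $\ell(T)\subseteq S'$. For the ``if'' direction I would argue that deleting any edge $e$ of $T$ splits it into two subtrees, each of which, having at least one vertex, contains a leaf of $T$; the path joining these two leaves traverses $e$, so once all leaves lie in $S'$ every edge is forced into the spanning subtree. The ``only if'' direction is immediate, since omitting a leaf lets one discard its pendant edge. This is essentially the leaf-priority principle already recorded in Proposition \ref{prop1}, and it is the step I would treat most carefully.

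With the equivalence in hand the construction is routine. I would set $S'_0=S\cup\ell(T)$; by the hypothesis $\#S'_0\le \#S+\#\ell(T)\le l+(k-l)=k$. Since $n\ge k+1>k\ge\#S'_0$, there are at least $k-\#S'_0$ vertices of $T$ lying outside $S'_0$, so I can enlarge $S'_0$ to a $k$-set $S'$ still satisfying $S\subseteq S'$ and $\ell(T)\subseteq S'$. By the claim $d_T(S')=n-1$, whence $\varepsilon_{k,l}(S;T)\ge n-1$; combined with the upper bound this forces $\varepsilon_{k,l}(S;T)=n-1$ for every $l$-set $S$.

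Finally, since every summand equals $n-1$,
\begin{equation*}
	\bar\varepsilon_{k,l}(T)=\binom{n}{l}^{-1}\sum_{\#S=l}\varepsilon_{k,l}(S;T)=\binom{n}{l}^{-1}\binom{n}{l}(n-1)=n-1 .
\end{equation*}
The only place needing genuine care is the leaf equivalence of the second paragraph; the padding and the averaging are pure bookkeeping. I expect the mild subtlety to be in verifying the two inequalities $\#S'_0\le k$ and $k<n$ used in the construction, which the hypotheses $\#\ell(T)\le k-l$ and $n\ge k+1$ supply exactly.
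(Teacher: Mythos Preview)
Your proof is correct and follows essentially the same approach as the paper. The paper's proof simply invokes Proposition~\ref{prop1} to conclude that $T_{k,l}(S)$ contains all leaves of $T$ and hence equals $T$, whereas you unpack this step explicitly by proving the equivalence ``$\ell(T)\subseteq S'$ iff the minimal spanning tree of $S'$ is $T$'' and constructing the witnessing $k$-set $S'$ directly; the underlying idea is identical.
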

\begin{proof}
By Proposition \ref{prop1}, $T_{k,l}(S)$ contains all leaves of $T$. So we have $\varepsilon_{k,l}(S;T)=n-1$ and then $\bar{\varepsilon}_{k,l}(T)=n-1$.
\end{proof}
\begin{remark}
	 One can easily see that $\#\mathcal T_{k,l}(S)=1$ if $\#\ell(T)\leq k-l$ or $k=l$; $\#\mathcal T_{k,l}(S)\geq1$ if $k>l$ and $\#\ell(T)> k-l$. The case $k=l$ follows from the acyclic property of a tree, or equivalently, the uniqueness of the minimal spanning tree of a vertex set within a tree. The case $\#\ell(T)\leq k-l$ comes from Proposition \ref{prop2}. The last case holds since we may have more choices of leaves as eccentric vertices.
\end{remark}

Without loss of generality, we always assume that $k-l<\#\ell(T)$ and $n\geq k+1$ in the rest parts of our paper.

\begin{proposition}
Let $P_n$ be the path of order $n>k$. We have
\begin{enumerate}
	\item $\bar{\varepsilon}_{k,l}(P_n)=
		\frac{(k-1)(n+1)}{k+1}$, if $k-l=0$;
   \item If $k-l=1$ and $k\geq3$, then \begin{align*}\begin{split}
   	\bar{\varepsilon}_{k,l}(P_n)=\binom{n}{k-1}^{-1}\sum_{d=k-2}^{n-1}\binom{d-1}{k-3} \frac{1}{2}\left(n^2-n-d^2+d+2h\left(n-d-h\right)\right),\end{split}\end{align*} 
   where $h=\left\lceil\frac{n-1-d}{2}\right\rceil$; 
   \item $\bar{\varepsilon}_{2,1}(P_n)=\left\lfloor\frac{3 n-2}{4}\right\rfloor$;
	\item $\bar{\varepsilon}_{k,l}(P_n)=n-1$, if $k-l\geq2$.
\end{enumerate}

We also have$$
\bar{\varepsilon}_{k,l}(K_{1,n-1})=k-\frac{l}{n},$$
where $l\leq k<n$, and
$$\bar{\varepsilon}_{n,l}(K_{1,n-1})=n-1.$$
\end{proposition}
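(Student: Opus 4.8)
The plan rests on one structural fact about paths: labelling the vertices of $P_n$ by $1,2,\dots ,n$ along the path, the minimal spanning tree of any vertex set $A\subseteq V(P_n)$ is the subpath joining its two extreme vertices, so
\[
d_{P_n}(A)=\max(A)-\min(A).
\]
I would record this span identity first and use it throughout the path computations. Part (1) is then immediate: since $\varepsilon_{k,l}(S)=d_{P_n}(S)$ when $k=l$, we have $\bar\varepsilon_{k,k}(P_n)=\binom{n}{k}^{-1}SW_k(P_n)$, and the cited value $SW_k(P_n)=\binom{n+1}{k+1}(k-1)$ together with $\binom{n+1}{k+1}/\binom{n}{k}=(n+1)/(k+1)$ yields $\frac{(k-1)(n+1)}{k+1}$. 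Part (4) follows from Proposition \ref{prop2}: when $k-l\ge 2=\#\ell(P_n)$ we have $\#\ell(P_n)\le k-l$, so $\varepsilon_{k,l}(S;P_n)=n-1$ for every $l$-set, hence $\bar\varepsilon_{k,l}(P_n)=n-1$.

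For part (2), with $l=k-1$ set $a=\min S$, $b=\max S$ and $d=b-a$. Adding a single vertex $w\notin S$ can only extend the span to the left (gain $a-1$) or to the right (gain $n-b$), so
\[
\varepsilon_{k,k-1}(S)=(b-a)+\max(a-1,\,n-b)=d+\max(a-1,\,n-a-d),
\]
which depends on $S$ only through $a$ and $d$. For fixed $a,b$ there are $\binom{d-1}{k-3}$ ways to choose the interior $k-3$ vertices; moreover $1\le a\le n-d$ and $k-2\le d\le n-1$. Summing gives
\[
\sum_{S}\varepsilon_{k,k-1}(S)=\sum_{d=k-2}^{n-1}\binom{d-1}{k-3}\sum_{a=1}^{n-d}\bigl[d+\max(a-1,\,n-a-d)\bigr].
\]
The part I expect to be the main obstacle is the inner positional sum $\sum_{a=1}^{n-d}\max(a-1,n-a-d)$. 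I would evaluate it through $\max(x,y)=\tfrac12(x+y+|x-y|)$, which reduces it to the symmetric sum $\sum_{a}|2a-(n-d)-1|=\lfloor (n-d)^2/2\rfloor$; setting $h=\lceil (n-1-d)/2\rceil$ rewrites $\tfrac12\lfloor (n-d)^2/2\rfloor$ as $h(n-d-h)$. Combining with $(n-d)d+\tfrac12(n-d)(n-d-1)=\tfrac12(n^2-n-d^2+d)$ produces the bracketed quantity $\tfrac12(n^2-n-d^2+d+2h(n-d-h))$, and dividing by $\binom{n}{k-1}$ is exactly the stated formula. Part (3) is the separate case $k=2,l=1$: the eccentricity of the vertex labelled $i$ is $\max(i-1,n-i)$, so after summing over $i$, dividing by $n$, and splitting on the parity of $n$, one obtains the claimed value.

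For the star $K_{1,n-1}$ with centre $c$, the minimal spanning tree of any set $A$ with $\#A\ge 2$ uses exactly one edge per leaf of $A$, so $d(A)$ equals the number of leaves in $A$. Maximizing over $S'\supseteq S$ with $\#S'=k$, one fills $S'$ with as many leaves as possible; since $k<n$ there are enough leaves, so the maximum is $k$ leaves when $c\notin S$ and $k-1$ when $c\in S$ (the centre occupying one slot). Thus
\[
\varepsilon_{k,l}(S)=\begin{cases}k,& c\notin S,\\ k-1,& c\in S.\end{cases}
\]
There are $\binom{n-1}{l}$ sets of the first kind and $\binom{n-1}{l-1}$ of the second, so $\bar\varepsilon_{k,l}=\binom{n}{l}^{-1}\bigl[k\binom{n-1}{l}+(k-1)\binom{n-1}{l-1}\bigr]$; applying $\binom{n-1}{l}/\binom{n}{l}=(n-l)/n$ and $\binom{n-1}{l-1}/\binom{n}{l}=l/n$ collapses this to $k-\frac{l}{n}$. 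Finally, for $k=n$ the only choice is $S'=V$ with $d(V)=n-1$, giving $\bar\varepsilon_{n,l}(K_{1,n-1})=n-1$.
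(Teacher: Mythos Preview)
The paper states this proposition without proof, so there is no authors' argument to compare against; your proposal stands on its own. Your treatment of parts (1), (2), (4) and of the star formulas is correct and well organized: the span identity $d_{P_n}(A)=\max A-\min A$ is the right starting point, the reduction of the inner sum in (2) via $\max(x,y)=\tfrac12(x+y+|x-y|)$ together with $\sum_{a=1}^{m}|2a-m-1|=\lfloor m^{2}/2\rfloor$ and the rewriting $\tfrac12\lfloor m^{2}/2\rfloor=h(m-h)$ with $h=\lceil(m-1)/2\rceil$ checks out in both parities, and the counting for $K_{1,n-1}$ is clean.

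The one genuine gap is part (3). You write that ``after summing over $i$, dividing by $n$, and splitting on the parity of $n$, one obtains the claimed value'', but if you actually carry this out you will find the stated value is wrong. With vertices labelled $1,\dots,n$ one gets
\[
\sum_{i=1}^{n}\max(i-1,n-i)=\left\lfloor\frac{n(3n-2)}{4}\right\rfloor
\]
(namely $m(3m-1)$ for $n=2m$ and $3m^{2}+2m$ for $n=2m+1$), so that
\[
\bar\varepsilon_{2,1}(P_n)=\frac{1}{n}\left\lfloor\frac{n(3n-2)}{4}\right\rfloor,
\]
which is \emph{not} $\lfloor(3n-2)/4\rfloor$: already $n=4$ gives $\bar\varepsilon_{2,1}(P_4)=5/2$ while $\lfloor(3\cdot4-2)/4\rfloor=2$, and $n=3$ gives $5/3$ versus $1$. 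The floor and the division by $n$ are transposed in the stated identity. Your method is fine; the discrepancy lies in the target formula itself, and you would have caught it had you not waved through the final step.
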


\section{$\sigma$-transformation}\label{sec3}
Let $\varepsilon_{X}(v)=\max\{d_G(u,v): u\in X\}$, where $X$ is a connected subgraph of $G$.  Suppose that $T$ can be described as shown in Figure \ref{Fig1}.  In detail, $X$, $Y$, $Z$ are subtrees of $T$ and connected by path $P=P(v_0,v_s)$ of length $s$. We also require that $\varepsilon_{Y}(v_0)\geq \max\{\varepsilon_{X}(v_s), \varepsilon_{Z}(v_s)\}$. We call this form the \textit{$\sigma$-form} of tree $T$. Notice that $X$ can be a singleton graph.

To provide a unified characterization of $\bar{\varepsilon}_{k,l}(T)$, especially for $k=l$, we need to give an extra restriction on $\sigma$-form of tree $T$, that is, $\#V(Y)\geq \#V(X)$. Under the restriction, we can define a transformation on tree $T$, named the \textit{$\sigma$-transformation}, by moving the subtree $Z$ from $v_s$ to $v_0$. Set $T'=\sigma(T)$. Clearly, the inverse of $\sigma$-transformation on $T'$ is defined by moving $Z$ from $v_0$ to $v_s$. See Figure \ref{Fig1}.  We call the transformed tree $T'$  the \textit{$\sigma^{-1}$-form} of $T'$, as shown on the right side of Figure \ref{Fig1}. 

\begin{figure}[h!]
	\includegraphics[width=1\textwidth]{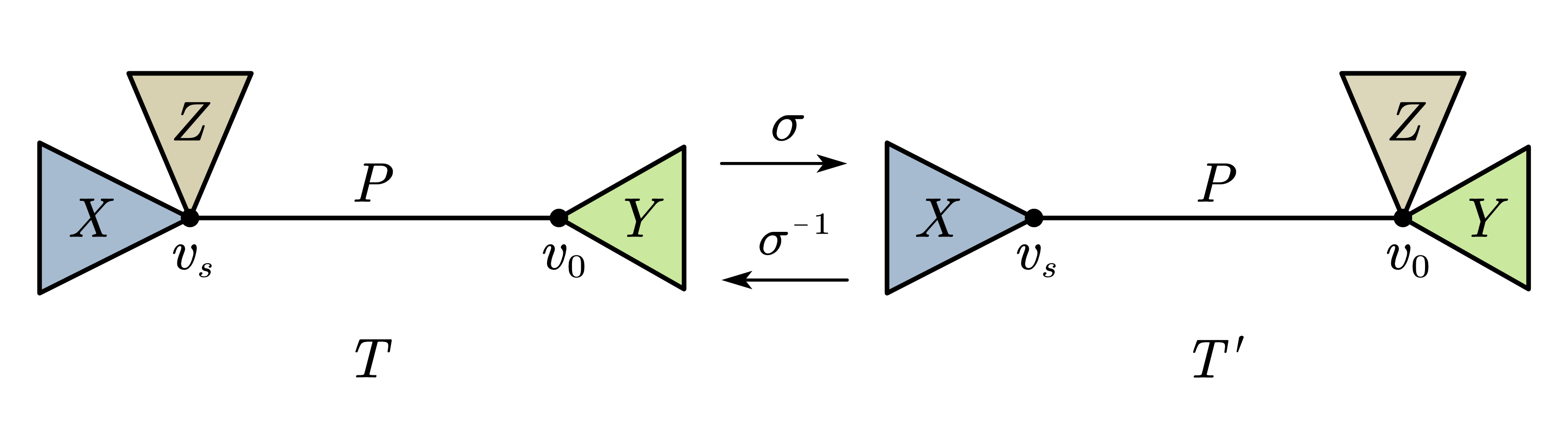}
	\caption{$\sigma$-transformation of $T$}\label{Fig1}
\end{figure}

The following lemma shows the existences of $\sigma$-transformation and its inverse.
\begin{lemma}\label{lem1}
Assume that all trees are of order $n\geq k+1$. If $T\ncong K_{1,n-1}$, then there exists a  $\sigma$-transformation on $T$.  If $T'\ncong P_n$, then there exists a $\sigma^{-1}$-transformation on $T'$.
\end{lemma}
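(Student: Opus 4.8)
The plan is to prove the two assertions separately, each time exhibiting an explicit decomposition of the tree into the prescribed pieces $X,Y,Z,P$ and then checking the two defining conditions of a $\sigma$-form, namely $\varepsilon_Y(v_0)\ge\max\{\varepsilon_X(v_s),\varepsilon_Z(v_s)\}$ and $\#V(Y)\ge\#V(X)$. In both cases a diametrical path $u_0u_1\cdots u_m$ is the natural scaffold: when $T\ncong K_{1,n-1}$ we have $m=\operatorname{diam}(T)\ge 3$, and when $T'\ncong P_n$ the path necessarily carries a vertex of degree $\ge 3$ (otherwise the path would exhaust the whole tree). The maximality of such a path pins down the heights of all the hanging branches, and this is precisely the information the eccentricity condition consumes.

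For the first assertion I would take the single edge $v_sv_0=u_1u_2$ (so $s=1$ and $P$ has no internal vertices), put $Z=\{u_0\}$, let $X=\{u_1\}$ together with the remaining leaf neighbours of $u_1$, and let $Y$ be the whole component of $u_2$ in $T-u_1u_2$. By maximality every neighbour of $u_1$ other than $u_2$ is a leaf, so $\varepsilon_X(v_s),\varepsilon_Z(v_s)\le 1$, whereas $\varepsilon_Y(v_0)\ge d_T(u_2,u_m)=m-2\ge 1$; hence the eccentricity condition holds for free. The transformation then moves $Z=\{u_0\}$ from $v_s$ to $v_0$, as required.

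The one step that needs a genuine idea, and the main obstacle, is the size restriction $\#V(Y)\ge\#V(X)$. With the choice above $\#V(X)=a$ and $\#V(Y)=n-(a+1)$, where $a$ is the number of leaf neighbours of $u_1$; this forces $a\le(n-1)/2$, which may well fail at the $u_0$-end. The fix is to observe that the two end-stars $\{u_1\}\cup(\text{leaves at }u_1)$ and $\{u_{m-1}\}\cup(\text{leaves at }u_{m-1})$ are vertex-disjoint (since $m\ge3$), so if $a,b$ count their leaves then $a+b\le n-2$ and hence $\min\{a,b\}\le(n-2)/2$. After reversing the path, if necessary, so that the lighter end plays the role of $u_1$, the size condition is met and $T$ is in $\sigma$-form.

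For the second assertion I would let $v_0=u_p$ be the first branching vertex met along the path from $u_0$, so $1\le p\le m-1$, route $P$ back along the pendant path $u_pu_{p-1}\cdots u_0$ (whose interior vertices have degree $2$ in $T'$), take $v_s=u_0$ with the singleton $X=\{u_0\}$, choose $Z$ to be any branch at $v_0$ other than the two leading toward $u_0$ and toward $u_m$ (one exists since $\deg(u_p)\ge3$), and let $Y$ gather the remaining branches at $v_0$. Maximality bounds the height of $Z$ by $\min\{p,m-p\}\le m-p\le\varepsilon_Y(v_0)$, so the eccentricity condition holds; the size condition is automatic because $\#V(X)=1$; and $s=p\ge1$. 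Moving $Z$ from $v_0$ to $v_s$ yields a bona fide $\sigma$-form whose $\sigma$-image is $T'$, so $T'$ is in $\sigma^{-1}$-form. Beyond the counting step highlighted above, the remaining verifications, that the listed sets are genuine subtrees, the harmless off-by-one in $\#V(Y)$ according to whether $v_0$ is assigned to $Y$ or to $P$, and the degenerate case $m=3$ where $u_2=u_{m-1}$, are routine.
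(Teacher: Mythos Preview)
Your argument is correct and, in fact, more fully worked out than the paper's own proof.

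For the first assertion the paper takes a different route: it lets $X$ be a single internal vertex of a short pendant path $P'$ (so $Z$ is the leaf-side of $P'$, $P$ is the segment back to the branching vertex, and $Y$ is the rest of $T$). With $X$ a singleton the size restriction $\#V(Y)\ge\#V(X)$ is free, so no counting is needed. You instead allow $X$ to absorb the entire star of leaves at $u_1$, which is why you must run the pigeonhole argument at the two ends of the diametrical path to force $\min\{a,b\}\le (n-2)/2$. That extra step is the price of your decomposition, but it buys robustness: your construction works uniformly even when every pendant path of $T$ has length~$1$ (e.g.\ a double star), a situation in which the paper's phrase ``let $X$ be an internal vertex of $P'$'' has no obvious referent. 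For the second assertion your construction and the paper's coincide in spirit: both take a pendant path from a leaf to the nearest branching vertex, set $X$ to be (essentially) the leaf end, and read off $Y,Z$ at the branching vertex. Your use of a diametrical path and the bound $\operatorname{height}(Z)\le\min\{p,m-p\}\le m-p\le\varepsilon_Y(v_0)$ is simply an explicit version of the paper's appeal to ``length $\le\operatorname{rad}(T)$''.
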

\begin{proof}
	There exists a branching vertex on $T$ such that it has a pendant path $P'$ of length $\leq \operatorname{rad}(T)$. Let $X$ be an internal vertex of $P'$. Hence, we can  obtain a $\sigma$-transformation on $T$.
	
	There exists a branching vertex on $T'$ such that it has a pendant path $P''$ of length $\leq \operatorname{rad}(T)$. Let $P''=X\cup P$. The choices of $Y$ and $Z$ are now obvious. Hence, the existence of $\sigma^{-1}$-transformations on $T'$ is confirmed.
\end{proof}
For convenience, assume that $T'_{k,l}(S)$, the subtree of $T'$, is obtained from $T_{k,l}(S)$ by applying $\sigma$-transformation or $(p,q)$-transformation once. 
\begin{lemma}\label{lem2}
For the $\sigma$-form of tree $T$, if there exists a subtree  $T_{k,l}(S)\in\mathcal T_{k,l}(S)$ such that the path $P$ is contained in $T_{k,l}(S)$, then we have $\varepsilon_{k,l}(S;T)=\varepsilon_{k,l}(S;T')$.
\end{lemma}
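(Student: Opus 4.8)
The plan is to prove the two inequalities $\varepsilon_{k,l}(S;T)\le\varepsilon_{k,l}(S;T')$ and $\varepsilon_{k,l}(S;T)\ge\varepsilon_{k,l}(S;T')$ by transporting an optimal Steiner $(k,l)$-tree from one tree to the other while controlling its size. First I would record a decomposition that isolates the moved subtree $Z$. Suppose $S'$ is a $k$-set with $S\subseteq S'$ whose minimal Steiner tree contains all of $P$; then both $v_0$ and $v_s$ are vertices of that tree. Writing $R=S'\setminus V(Z)$ for the part of $S'$ lying outside $Z$, the $Z$-vertices attach to the rest through $v_s$ in $T$ and through $v_0$ in $T'$, so that
\[
d_T(S')=d_T\bigl(R\cup\{v_s\}\bigr)+d_Z \qquad\text{and}\qquad d_{T'}(S')=d_{T'}\bigl(R\cup\{v_0\}\bigr)+d_Z,
\]
where $d_Z$ denotes the number of edges the tree spends inside $Z$. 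Since the internal structure of $Z$ (and of $X$ and $Y$) is untouched by the $\sigma$-transformation, $d_Z$ is the same in both expressions, and everything reduces to comparing $d(R\cup\{v_s\})$ with $d(R\cup\{v_0\})$ in the part of $T$ and $T'$ formed by $X$, $P$ and $Y$, which is common to both.

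The key point is that whenever $R$ \emph{itself} spans $P$ — that is, $R$ meets the $Y$-side (forcing $v_0$) and the $X$-side (forcing $v_s$) — both $v_0$ and $v_s$ already lie on the minimal tree of $R$, so $d(R\cup\{v_s\})=d(R\cup\{v_0\})=d(R)$ and hence $d_T(S')=d_{T'}(S')$. In words, once $P$ is pinned by vertices outside $Z$, relocating $Z$ from $v_s$ to $v_0$ is size-neutral. Thus the proof reduces to the claim that among the optimal Steiner $(k,l)$-trees of $S$ in $T$ containing $P$ there is one whose non-$Z$ part $R$ already spans $P$, and symmetrically for $T'$. Granting this, transporting the chosen $S'$ to the other tree preserves its size and realizes $\varepsilon_{k,l}(S;T)=\varepsilon_{k,l}(S;T')$. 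The opposite easy sub-cases, in which an optimal tree in $T'$ does not cross $P$ at all, only help for the inequality $\varepsilon_{k,l}(S;T)\ge\varepsilon_{k,l}(S;T')$, since returning $Z$ to $v_s$ then forces the traversal of $P$ and can only enlarge the Steiner distance.

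The hard part will be establishing this claim, namely that one may choose an optimal path-containing tree in which the far endpoint $v_s$ is forced by a vertex \emph{outside} $Z$ — a vertex of $X$, or $v_s$ itself, or a vertex of $S$ lying on the $v_s$-side. If instead every optimal tree reaches the $v_s$-side only through $Z$, then moving $Z$ to $v_0$ unpins $P$ and the size can genuinely drop; this is precisely the configuration the $\sigma$-form hypotheses $\varepsilon_Y(v_0)\ge\max\{\varepsilon_X(v_s),\varepsilon_Z(v_s)\}$ and $\#V(Y)\ge\#V(X)$ are meant to control. Using Proposition \ref{prop1} to take the eccentric vertices as leaves, I would re-route the reach currently spent inside $Z$ to an equally deep leaf of $X$ (to re-force $v_s$), or, once $Z$ has been relocated beside $Y$ at $v_0$, into $Y$, and argue from the concave/greedy behaviour of ``edges reached per added leaf'' that the depth bounds prevent any net loss of Steiner distance. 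Checking that this exchange is always available — in particular when $S$ itself places a vertex in $Z$, so that the relocation of $Z$ is forced rather than optional — is the crux of the argument, and is exactly where the two structural inequalities of the $\sigma$-form must do the real work.
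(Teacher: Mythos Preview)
You have correctly isolated the crux: transporting an optimal $T_{k,l}(S)$ from $T$ to $T'$ preserves its edge count, but if $v_s$ is pinned only by vertices of $Z$ then after the $\sigma$-move $v_s$ becomes a superfluous leaf of the transported subtree, and one can no longer exhibit a $k$-set $S''\supseteq S$ in $T'$ whose minimal Steiner tree is that subtree. You flag this as the ``hard part'' and hope the $\sigma$-form inequalities will supply a replacement vertex in $X$ (or in $Y$) so that some optimal $S'$ has its non-$Z$ part $R$ already spanning $P$. Unfortunately this cannot be carried out, because the lemma as stated is false. Take $T=P_6$ labelled $y_2\,y_1\,v_0\,v_s\,z_1\,z_2$, with $Y=\{v_0,y_1,y_2\}$, $P=v_0v_s$, $X=\{v_s\}$, $Z=\{v_s,z_1,z_2\}$; the $\sigma$-form conditions hold ($\varepsilon_Y(v_0)=\varepsilon_Z(v_s)=2\ge \varepsilon_X(v_s)=0$ and $\#V(Y)\ge\#V(X)$). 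For $S=\{y_1\}$, $k=2$, $l=1$ the unique $T_{2,1}(S)$ in $T$ is the path $y_1v_0v_sz_1z_2$ of size $4$, and it contains $P$. In $T'$ (where $z_1,z_2$ hang from $v_0$ and $v_s$ is a pendant) the eccentricity of $y_1$ is only $3$. Your re-routing fails here: $X=\{v_s\}$ offers no leaf to substitute for $z_2$, and sending the eccentric vertex into $Y$ collapses the Steiner distance to $1$.

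For comparison, the paper's own proof simply asserts that ``the size of $T_{k,l}(S)$ is unchanged after $\sigma$-transformation'' and immediately writes $\varepsilon_{k,l}(S;T)\le\varepsilon_{k,l}(S;T')$; this glides over precisely the obstruction you identified, and the example above shows that inequality \eqref{eq3.1} is itself false in general. What rescues the paper's subsequent use of the lemma, in the case analysis of Lemma~\ref{lem3}, is that it is invoked only under side conditions on $\bm x''$ (an eccentric vertex in $X$, or none in $Z$, etc.)\ that force $S'$ to contain a vertex pinning $v_s$ from outside $Z$ --- i.e.\ exactly when your claim ``$R$ spans $P$'' holds for free, so your decomposition then gives the equality directly. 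The right project is therefore not to prove the lemma in its stated generality, but to record the extra hypothesis and verify it case by case where the lemma is actually applied.
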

\begin{proof}
It is clear that the size of $T_{k,l}(S)$ is unchanged after $\sigma$-transformation. So, 	
\begin{equation}\label{eq3.1}
\varepsilon_{k,l}(S;T)\leq\varepsilon_{k,l}(S;T').
\end{equation}
Now suppose that every $T'_{k,l}(S)$ does not contain path $P$. We then get $T'_{k,l}(S)\subseteq Z\cup Y\subseteq T'$ and hence $\varepsilon_{k,l}(S;T)\geq \varepsilon_{k,l}(S;T')+d$, a contradiction to \eqref{eq3.1}. So there exists a subtree $T'_{k,l}(S)\subseteq T'$ such that $P\subseteq T'_{k,l}(S)$. Similar to \eqref{eq3.1}, we have $$\varepsilon_{k,l}(S;T)\geq\varepsilon_{k,l}(S;T').$$
The proof is now complete.
\end{proof}
Let $(A)$ be the the Iverson convention, where $(A)=1$ if the statement $A$ is true, and $(A)=0$ if $A$ is false.  Let $\bm{x}(H)\in\{0,1\}^4$  be a  vector, where the coordinate components are $(H\cap(V(Y)\backslash\{v_0\})\neq\emptyset)$, $(H\cap(V(X)\backslash\{v_s\})\neq\emptyset)$, $(H\cap(V(Z)\backslash\{v_s\})\neq\emptyset)$, $(H\cap V(P)\neq\emptyset)$ in turn and $H$ is a subset of $V(T)$. We also let $\bm{x}_i$ be the $i$-th component of $\bm{x}$.

We consider two vectors representing the statements of the Steiner tree, that is, $\bm{x}'=\bm{x}(S)$ and $\bm{x}''=\bm{x}(EV(T_{k,l}(S)))$. 
Clearly, $\bm{x}''=(0,0,0,0)$ means that $k=l$ and thus $\varepsilon_{k,l}(S;T)=d_T(S)$ is the Steiner distance of $S$ in $T$.

The following two lemmas are crucial to solve the Steiner $(k,l)$-eccentricity problem.
\begin{lemma}\label{lem3}
For $T\ncong K_{1,n-1}$, $n\geq k+1$ and $k>l$, we have $\bar\varepsilon_{k,l}(T)\geq\bar\varepsilon_{k,l}(T')$, where the equality holds if $\varepsilon_{X}(v_s)=\varepsilon_{Y}(v_0)$.
\end{lemma}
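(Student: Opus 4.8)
The plan is to prove the pointwise bound $\varepsilon_{k,l}(S;T)\ge\varepsilon_{k,l}(S;T')$ for every $l$-set $S$; since $V(T)=V(T')$ and the normalizing factor $\binom{n}{l}^{-1}$ is common to both trees, summing over $S$ delivers $\bar\varepsilon_{k,l}(T)\ge\bar\varepsilon_{k,l}(T')$ at once. Everything hinges on the edge-cut reading of Steiner size: for a vertex set $W$, $d_T(W)$ is the number of edges $e$ for which both components of $T-e$ meet $W$. Classifying edges as interior to $Y$, interior to $X$, interior to $Z$, or lying on $P$, I write $d(W)=c_Y(W)+c_X(W)+c_Z(W)+c_P(W)$. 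Deleting an interior edge of $Y$ (or of $X$, or of $Z$) splits $V(T)=V(T')$ into the same two vertex blocks whether $Z$ hangs at $v_s$ or at $v_0$; hence $c_Y,c_X,c_Z$ are literally the same functions of $W$ in $T$ and in $T'$, and the entire effect of the transformation is carried by the path term $c_P$.

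First I would pin down $c_P$ through the vectors $\bm{x}(W)$. For the path edge $v_iv_{i+1}$ the two shores are $\{v_0,\dots,v_i\}\cup(V(Y)\setminus\{v_0\})$ versus $\{v_{i+1},\dots,v_s\}\cup(V(X)\setminus\{v_s\})\cup(V(Z)\setminus\{v_s\})$ in $T$, while in $T'$ the block $V(Z)\setminus\{v_s\}$ jumps to the left shore. A single split on the third coordinate of $\bm{x}(W)$ then yields two facts: if $W$ misses $V(Z)\setminus\{v_s\}$ the path counts agree, so $d_T(W)=d_{T'}(W)$; and if $W$ meets $V(Z)\setminus\{v_s\}$, then $v_iv_{i+1}$ is cut in $T$ precisely when its left shore is occupied and cut in $T'$ precisely when its right shore is occupied. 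The clean consequence I want to extract is this: once $W$ meets both $V(Y)\setminus\{v_0\}$ and $V(Z)\setminus\{v_s\}$, every path edge is cut in $T$, so the $T$-path-count equals $s$ and hence $d_T(W)\ge d_{T'}(W)$.

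The comparison is then closed by a single-leaf exchange on the eccentric vertices. Fix $S$, choose a Steiner $(k,l)$-tree achieving $\varepsilon_{k,l}(S;T')$, realized by $W'=S\cup EV$ with $EV$ consisting of leaves (Proposition \ref{prop1}); note $EV\neq\emptyset$ because $k>l$. If the first coordinate of $\bm{x}(W')$ is $1$ (so $W'$ already reaches into $Y$), or its third coordinate is $0$ (so $W'$ avoids $Z$), the previous paragraph gives $d_T(W')\ge d_{T'}(W')$ and I keep the same eccentric set in $T$. The only remaining pattern has $\bm{x}(W')$ of the shape $(0,\ast,1,\ast)$, in which both $S$ and $EV$ miss $Y$. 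Because $\varepsilon_Y(v_0)\ge\max\{\varepsilon_X(v_s),\varepsilon_Z(v_s)\}$, a deepest leaf of $Y$ is never shallower than any leaf of $X$ or of $Z$, so I replace one eccentric leaf of $W'$ lying in $X$ or $Z$ by a deepest (and fresh) leaf of $Y$. This exchange cannot decrease $c_Y+c_X+c_Z$ (it trades a loss of at most $\varepsilon_X(v_s)$ or $\varepsilon_Z(v_s)$ for a gain of $\varepsilon_Y(v_0)$), and, by placing a vertex in $V(Y)\setminus\{v_0\}$ while retaining one in $V(Z)\setminus\{v_s\}$, it pushes the $T$-path-count up to $s$. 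Hence the new set $S\cup EV^{\mathrm{new}}$ satisfies $d_T(S\cup EV^{\mathrm{new}})\ge d_{T'}(W')$, which is exactly $\varepsilon_{k,l}(S;T)\ge\varepsilon_{k,l}(S;T')$.

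The point I expect to fuss over is keeping a $Z$-vertex alive during that exchange: if the sole $Z$-vertex of $W'$ is one eccentric leaf and $S$ itself avoids $Z$, deleting it would empty $Z$ and break the rule of the second paragraph. I would dispose of this by optimality: in $T'$ both $Y$ and $Z$ hang from $v_0$, so replacing that lone $Z$-leaf by a $Y$-leaf of at least equal depth cannot lower $d_{T'}$; thus the $T'$-optimizer may be taken with first coordinate $1$ from the outset, returning to the easy case. Carrying this verification through the handful of patterns of $\bm{x}(S)$ is the routine but lengthy part, and it is precisely where the depth hypotheses on $Y$ are spent; the size hypothesis $\#V(Y)\ge\#V(X)$ plays no role in this $k>l$ argument and is reserved for the degenerate reading $k=l$ of the $\sigma$-form. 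Finally, when $\varepsilon_X(v_s)=\varepsilon_Y(v_0)$ the $X\!\leftrightarrow\!Y$ exchange becomes depth-neutral and runs in reverse as well; applying the identical argument to the inverse move $\sigma^{-1}$ yields $\varepsilon_{k,l}(S;T')\ge\varepsilon_{k,l}(S;T)$, so the bound collapses to an equality for every $S$, and hence for $\bar\varepsilon_{k,l}$.
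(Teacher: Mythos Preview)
Your argument is correct and reaches the same pointwise inequality $\varepsilon_{k,l}(S;T)\ge\varepsilon_{k,l}(S;T')$ that the paper establishes, but the route is genuinely different. The paper fixes a $T$-optimal Steiner $(k,l)$-tree and runs a case analysis on the pair $(\bm{x}(S),\bm{x}(EV(T_{k,l}(S))))$, repeatedly appealing to Lemma~\ref{lem2} (the ``$P\subseteq T_{k,l}(S)$ forces equality'' lemma) to close each case. You instead start from a $T'$-optimizer and use the edge-cut identity $d(W)=c_Y(W)+c_X(W)+c_Z(W)+c_P(W)$ to isolate the path term as the sole carrier of the transformation; the two structural observations (that $d_T(W)=d_{T'}(W)$ when $W\cap Z^\circ=\emptyset$, and $d_T(W)\ge d_{T'}(W)$ when $W$ meets both $Y^\circ$ and $Z^\circ$) replace Lemma~\ref{lem2} entirely, and a single depth-controlled leaf exchange disposes of the residual pattern $(0,*,1,*)$. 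Your treatment of the delicate sub-case (the lone eccentric $Z$-leaf) is sound: replacing it by a deepest $Y$-leaf cannot lower $d_{T'}$ because in $T'$ both subtrees hang from $v_0$ and $\varepsilon_Y(v_0)\ge\varepsilon_Z(v_s)$, so the optimizer may be assumed to satisfy $\bm{x}_1=1$. Your observation that the size hypothesis $\#V(Y)\ge\#V(X)$ is unused for $k>l$, and your symmetry argument for the equality clause under $\varepsilon_X(v_s)=\varepsilon_Y(v_0)$, are both accurate. The trade-off: the paper's enumeration is more mechanical but makes each case explicit; your edge-cut decomposition explains \emph{why} only $c_P$ matters and yields a shorter, more conceptual proof, at the cost of the small bookkeeping you flag (checking an eccentric leaf is always available outside $Y^\circ$, including the degenerate possibility $EV=\{v_s\}$ when $X=\{v_s\}$, which your inequality still handles).
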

\begin{proof}
We consider the following cases.
\begin{enumerate}
	\item $\bm{x}'=(1,0,0,1)$, $(0,0,0,1)$, $(1,0,0,0)$.
	
	If there exists a tree $T_{k,l}(S)$ such that $\bm{x}''_3=0$, then we have $\varepsilon_{k,l}(S;T)=\varepsilon_{k,l}(S;T')$ by Lemma \ref{lem2}; Otherwise, $\bm{x}''_3=1$ holds for all $T_{k,l}(S)$. We now check $\bm{x}''_1$. When $\bm{x}''_1=1$  for some $T_{k,l}(S)$, we also derive $\varepsilon_{k,l}(S;T)=\varepsilon_{k,l}(S;T')$. When $\bm{x}''_1=0$, we get $\varepsilon_{X}(v_s)\leq\varepsilon_{Z}(v_s)$. Thus,  $\varepsilon_{k,l}(S;T)\geq\varepsilon_{k,l}(S;T')$, where the equation holds if $\varepsilon_{X}(v_s)=\varepsilon_{Y}(v_0)$.
	\item $\bm{x}'=(1,0,1,0)$, $(1,0,1,1)$.
	
	If there exists a tree $T_{k,l}(S)$ such that $\bm{x}''_2=1$, then  $\varepsilon_{k,l}(S;T)=\varepsilon_{k,l}(S;T')$. Otherwise, for all $T_{k,l}(S)$, $\bm{x}''_2=0$. Therefore, $\varepsilon_{k,l}(S;T)\geq\varepsilon_{k,l}(S;T')$. Note that $\varepsilon_{X}(v_s)=\varepsilon_{Y}(v_0)$ implies $\bm{x}''_2=1$.

\item $\bm{x}'=(0,0,1,0)$, $(0,0,1,1)$.

 $\bm{x}''_1=1$ surely holds by the condition of $\sigma$-transformation. Suppose there exists a tree $T_{k,l}(S)$ such that $\bm{x}''_2=1$. We have $\varepsilon_{k,l}(S;T)=\varepsilon_{k,l}(S;T')$. We now consider the subcase: $\forall T_{k,l}(S)$, $\bm{x}''_2=0$. We have 
 \begin{equation*}
 	\varepsilon_{k,l}(S;T)-\varepsilon_{k,l}(S;T')\geq \min\{d, d_T(S,v_s)\}\geq 0,
 \end{equation*}
where the equation holds if $\varepsilon_{X}(v_s)=\varepsilon_{Y}(v_0)$.

\item $\bm{x}''=(0,1,\{0,1\}^2)$, $(1,1,\{0,1\}^2)$.

For these cases, one can easily check that there exists a tree $T_{k,l}(S)$ such that $P\subseteq T_{k,l}(S)$. It follows that $\varepsilon_{k,l}(S;T)=\varepsilon_{k,l}(S;T')$.
\end{enumerate}

Combining the above discussions yields
\begin{equation}\label{eq3.2}
	\sum_{S\subseteq V(T)}\varepsilon_{k,l}(S;T)\geq \sum_{S\subseteq V(T')}\varepsilon_{k,l}(S;T'),
\end{equation}
where the equation holds if $\varepsilon_{X}(v_s)=\varepsilon_{Y}(v_0)$. Taking the arithmetic mean, \eqref{eq3.2} shows that  $\bar\varepsilon_{k,l}(T)\geq\bar\varepsilon_{k,l}(T')$.
\end{proof}

\begin{lemma}\label{lem4}
	Assume that  $T\ncong K_{1,n-1}$, $n\geq k+1$.  We have $$\bar\varepsilon_{k,k}(T)\geq\bar\varepsilon_{k,k}(T').$$
\end{lemma}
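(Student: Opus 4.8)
The plan is to exploit the fact that for $k=l$ the eccentric-vertex set is empty, so $\bm{x}''=(0,0,0,0)$ and $\varepsilon_{k,k}(S;T)=d_T(S)$; consequently $\bar\varepsilon_{k,k}(T)=\binom{n}{k}^{-1}SW_k(T)$, and it suffices to prove $SW_k(T)\ge SW_k(T')$. I would not re-run the vector case analysis of Lemma \ref{lem3} (it is exactly at $k=l$ that the set-by-set inequality fails to be sign-definite), but instead pass to the edge-counting form of the Steiner Wiener index: in a tree the Steiner tree of a $k$-set $S$ uses an edge $e$ iff $S$ meets both components of $T-e$, so writing $n_1(e),n_2(e)$ for the sizes of these components,
\begin{equation*}
SW_k(T)=\sum_{e\in E(T)}\left[\binom{n}{k}-\binom{n_1(e)}{k}-\binom{n_2(e)}{k}\right].
\end{equation*}

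First I would compare $T$ and $T'$ edge by edge. Every edge lying inside a branch $X$, $Y$ or $Z$ keeps both of its component sizes, since moving $Z$ from $v_s$ to $v_0$ changes neither the cardinality of the subtree split off by such an edge nor $n$; these terms cancel. Only the path edges $e_i=v_iv_{i+1}$ $(0\le i\le s-1)$ are affected. Writing $n_Y,n_X,n_Z$ for the numbers of vertices of $Y,X,Z$ other than $v_0,v_s,v_s$ respectively (so the hypothesis $\#V(Y)\ge\#V(X)$ reads $n_Y\ge n_X$), the $v_0$-side of $e_i$ has $n_Y+i+1$ vertices in $T$ and $n_Y+n_Z+i+1$ in $T'$, while the $v_s$-side has $n_X+n_Z+s-i$ in $T$ and $n_X+s-i$ in $T'$. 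Substituting into the formula above and cancelling the common $\binom{n}{k}$ yields
\begin{equation*}
SW_k(T)-SW_k(T')=\sum_{i=0}^{s-1}\left[g(n_Y+i+1)-g(n_X+s-i)\right],\qquad g(x):=\binom{x+n_Z}{k}-\binom{x}{k}.
\end{equation*}

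The decisive fact is that $g$ is non-decreasing on the integers: since $\binom{x+1}{k}-\binom{x}{k}=\binom{x}{k-1}$ is itself non-decreasing, we get $g(x+1)-g(x)=\binom{x+n_Z}{k-1}-\binom{x}{k-1}\ge0$ (discrete convexity of the binomial coefficient, with the convention $\binom{m}{k}=0$ for $m<k$). I expect the main obstacle to be \emph{not} this monotonicity but the realization that a pointwise edge-by-edge bound is false: comparing $g(n_Y+i+1)$ with $g(n_X+s-i)$ for a single $i$ would require $n_Y-n_X\ge s-1-2i$, which $n_Y\ge n_X$ does not provide for small $i$. The resolution is to sum over the path first and re-index. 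As $i$ runs over $0,\dots,s-1$, the argument $n_Y+i+1$ runs through $\{n_Y+1,\dots,n_Y+s\}$ and $n_X+s-i$ runs through the same-length block $\{n_X+1,\dots,n_X+s\}$, so substituting $j=1,\dots,s$ turns the difference into $\sum_{j=1}^{s}\bigl[g(n_Y+j)-g(n_X+j)\bigr]$.

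Finally, since $n_Y\ge n_X$ gives $n_Y+j\ge n_X+j$ for every $j$, the monotonicity of $g$ makes each summand non-negative, whence $SW_k(T)\ge SW_k(T')$ and therefore $\bar\varepsilon_{k,k}(T)\ge\bar\varepsilon_{k,k}(T')$. I would close by noting that this argument is insensitive to whether each branch shares its attachment vertex with the path or is joined to it by an edge, since in both models the non-path edges retain their component sizes; and that equality holds precisely when $g(n_Y+j)=g(n_X+j)$ for all $j$, which occurs in particular when $n_Y=n_X$ or $n_Z=0$.
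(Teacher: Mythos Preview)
Your proof is correct and genuinely different from the paper's. Both arguments reduce to comparing $SW_k(T)$ and $SW_k(T')$ (since $\varepsilon_{k,k}(S;T)=d_T(S)$), but the paper proceeds by a direct case analysis on the pattern vector $\bm{x}'=\bm{x}(S)$: it identifies the five configurations of $S$ for which $d_T(S)$ can change, pairs up $(1,0,1,0)$ with $(0,1,1,0)$ and $(1,0,1,1)$ with $(0,1,1,1)$, and uses $\#V(Y)\ge\#V(X)$ together with the symmetry of $P$ to conclude that each paired sum is non-negative. Your route instead passes through the edge-contribution formula $SW_k(T)=\sum_e\bigl[\binom{n}{k}-\binom{n_1(e)}{k}-\binom{n_2(e)}{k}\bigr]$, notes that only the $s$ path edges change their component sizes, and then reduces the difference to $\sum_{j=1}^{s}[g(n_Y+j)-g(n_X+j)]$ with $g(x)=\binom{x+n_Z}{k}-\binom{x}{k}$, which is non-negative by discrete convexity of $\binom{\cdot}{k}$.

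What each buys: the paper's argument is self-contained within its vector framework and mirrors the structure of Lemma~\ref{lem3}, but the pairing-and-symmetry steps are somewhat ad hoc. Your argument is cleaner and more structural; the edge formula localizes the change to $P$, and the re-indexing trick you flag (summing over the whole path before comparing, rather than edge-by-edge) is exactly what replaces the paper's pairing step. Your approach also makes the equality cases transparent ($n_Y=n_X$ or $n_Z=0$), and it would extend without change to any edge-weighted Steiner-Wiener variant for which the per-edge weight is a convex function of the component sizes.
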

\begin{proof}
	We only need to check the cases that $\varepsilon_{k,k}(S;T)=d_T(S)$ may change, as follows:
	\begin{enumerate}
		\item $\bm{x}'=(1,0,1,0)$. It follows that $\varepsilon_{k,k}(S;T)-\varepsilon_{k,k}(S;T')=s$.
		\item $\bm{x}'=(0,1,1,0)$. It follows that $\varepsilon_{k,k}(S;T)-\varepsilon_{k,k}(S;T')=-s$.
		
		Recall that $\#V(Y)\geq \#V(X)$. We immediately obtain that the total sum of the above two cases is greater than $0$.
		\item $\bm{x}'=(1,0,1,1)$. We see that $\varepsilon_{k,k}(S;T)-\varepsilon_{k,k}(S;T')=d(S\cap V(P), v_s)$.
		\item $\bm{x}'=(0,1,1,1)$. We see that $\varepsilon_{k,k}(S;T)-\varepsilon_{k,k}(S;T')=-d(S\cap V(P), v_0)$.
		
		Recall that $\#V(Y)\geq \#V(X)$. By the symmetry of $P$, we similarly get that the total sum of the above two cases is greater than $0$.
		
		\item $\bm{x}'=(0,0,1,1)$. It follows that $\varepsilon_{k,k}(S;T)-\varepsilon_{k,k}(S;T')=d(S\cap V(P), v_0)-d(S\cap V(P), v_s)$. By the symmetry of $P$, the total sum of this case equals $0$.
	\end{enumerate}
	Combining all the cases together and then taking the arithmetic mean,  we derive that $\bar\varepsilon_{k,k}(T)\geq\bar\varepsilon_{k,k}(T')$.
\end{proof}
Combining Lemmas \ref{lem3} and \ref{lem4} yields
\begin{theorem}\label{thm1}
	Assume that  $T\ncong K_{1,n-1}$ and $n> k\geq l$.  We have $$\bar\varepsilon_{k,l}(T)\geq\bar\varepsilon_{k,l}(T').$$
\end{theorem}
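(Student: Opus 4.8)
The plan is to read the hypothesis $n > k \geq l$ as splitting into the two mutually exclusive regimes $k > l$ and $k = l$, and to dispatch each by the appropriate lemma already in hand. First I would note that the standing assumption $T \ncong K_{1,n-1}$, together with Lemma \ref{lem1}, guarantees that a $\sigma$-transformation on $T$ actually exists; hence $T' = \sigma(T)$ is a well-defined tree and the comparison $\bar\varepsilon_{k,l}(T) \geq \bar\varepsilon_{k,l}(T')$ is meaningful. With $T'$ fixed, the entire argument reduces to showing the inequality separately on the two regimes and observing that together they exhaust $k \geq l$.

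For the regime $k > l$, I would invoke Lemma \ref{lem3} directly: its hypotheses are precisely $T \ncong K_{1,n-1}$, $n \geq k+1$ (which is the same as $n > k$), and $k > l$, and its conclusion is verbatim $\bar\varepsilon_{k,l}(T) \geq \bar\varepsilon_{k,l}(T')$. For the boundary regime $k = l$, I would instead invoke Lemma \ref{lem4}, whose hypotheses $T \ncong K_{1,n-1}$ and $n \geq k+1$ are again supplied by the theorem's assumptions, and whose conclusion $\bar\varepsilon_{k,k}(T) \geq \bar\varepsilon_{k,k}(T')$ is exactly the desired statement specialized to $k = l$. Since $\{k > l\} \cup \{k = l\} = \{k \geq l\}$, the two invocations cover every admissible $(k,l)$ and the inequality holds throughout.

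Because all the substantive estimation has been front-loaded into the case analysis of Lemmas \ref{lem3} and \ref{lem4}, I do not expect a genuine obstacle here; the theorem is essentially a bookkeeping merge of the two regimes. If I had to name the one point requiring care, it is checking that both lemmas operate on the \emph{same} transformed tree $T'$: both must use the identical $\sigma$-form decomposition of $T$ into $X, Y, Z$ and the path $P$, and in particular both must respect the size restriction $\#V(Y) \geq \#V(X)$, since Lemma \ref{lem4} uses exactly that inequality to control the cancellation across the cases $\bm{x}' = (1,0,1,0)$ versus $(0,1,1,0)$. Once this alignment is confirmed, so that the $T'$ in each conclusion is literally the same object, the case split closes the proof at once.
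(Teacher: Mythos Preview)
Your proposal is correct and matches the paper's own proof, which simply states that the theorem follows by combining Lemmas~\ref{lem3} and~\ref{lem4}. Your additional remarks about Lemma~\ref{lem1} ensuring the existence of $T'$ and about both lemmas referring to the same $\sigma$-form are sound clarifications, but the core argument---split into $k>l$ and $k=l$ and invoke the respective lemma---is exactly what the paper does.
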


From Lemma \ref{lem1} and Theorem \ref{thm1} we derive that
\begin{theorem}\label{thm2}
For $T'\ncong P_n$ and $n\geq k+1$, there exists a $\sigma^{-1}$-transformation such that $$\bar\varepsilon_{k,l}(T')\leq\bar\varepsilon_{k,l}(T).$$
\end{theorem}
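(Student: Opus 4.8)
The plan is to read Theorem \ref{thm2} as the inverse counterpart of Theorem \ref{thm1} and to obtain it by composing the existence result of Lemma \ref{lem1} with the monotonicity supplied by Theorem \ref{thm1}. Concretely, I would start from a tree $T'\ncong P_n$ with $n\geq k+1$ and invoke Lemma \ref{lem1} to produce a $\sigma^{-1}$-transformation on $T'$; call the resulting tree $T$. By the very definition of the $\sigma^{-1}$-transformation as the inverse of $\sigma$, the tree $T$ is in $\sigma$-form and $T'=\sigma(T)$, so that $T$ and $T'$ stand in exactly the relationship (original versus $\sigma$-image) required to feed them into Theorem \ref{thm1}. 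Note that the roles of the symbols swap relative to Theorem \ref{thm1}: there the given tree is $T$ and its image is $T'$, whereas here the given tree is $T'$ and the preimage I construct is $T$.

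The one hypothesis of Theorem \ref{thm1} not handed to us for free is $T\ncong K_{1,n-1}$, and verifying this is the only substantive step. I would use the structure of the $\sigma$-form directly. After the $\sigma^{-1}$-transformation, $T$ carries the decomposition $X,Y,Z$ joined by the path $P=P(v_0,v_s)$ of length $s\geq1$, with the subtree $Z$ now attached at $v_s$ and $Y$ attached at $v_0$. Since $Z$ is chosen as a genuine subtree hanging at the branching vertex (so $\varepsilon_Z(v_s)\geq1$ after the move) and $\varepsilon_Y(v_0)\geq\max\{\varepsilon_X(v_s),\varepsilon_Z(v_s)\}\geq1$, there are vertices $y\in Y$ with $d_T(y,v_0)\geq1$ and $z\in Z$ with $d_T(z,v_s)\geq1$; consequently $d_T(y,z)=d_T(y,v_0)+s+d_T(v_s,z)\geq 1+s+1\geq3$. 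Hence $\operatorname{diam}(T)\geq3>2=\operatorname{diam}(K_{1,n-1})$, which forces $T\ncong K_{1,n-1}$.

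With $T\ncong K_{1,n-1}$ and $n>k\geq l$ in hand, Theorem \ref{thm1} applies to $T$ and gives $\bar\varepsilon_{k,l}(T)\geq\bar\varepsilon_{k,l}(\sigma(T))=\bar\varepsilon_{k,l}(T')$, which is precisely the asserted inequality $\bar\varepsilon_{k,l}(T')\leq\bar\varepsilon_{k,l}(T)$. Since Lemma \ref{lem1} guarantees that such a $T$ exists whenever $T'\ncong P_n$, this completes the argument.

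I expect the only delicate point to be the bookkeeping in the second paragraph: one must make sure that the tree $T$ returned by Lemma \ref{lem1} genuinely inherits every side condition built into the $\sigma$-form, namely $\#V(Y)\geq\#V(X)$ and $\varepsilon_Y(v_0)\geq\max\{\varepsilon_X(v_s),\varepsilon_Z(v_s)\}$, on which Theorem \ref{thm1} silently relies, and to handle the boundary situation where $X$ is a singleton so that no degenerate choice of $Z$ renders the transformation vacuous. Everything beyond this verification is immediate from the two quoted results.
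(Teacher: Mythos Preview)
Your proposal is correct and follows exactly the route the paper takes: the paper simply states that Theorem~\ref{thm2} is obtained by combining Lemma~\ref{lem1} (existence of a $\sigma^{-1}$-transformation when $T'\ncong P_n$) with Theorem~\ref{thm1} (monotonicity under $\sigma$). Your additional verification that the preimage $T$ is not $K_{1,n-1}$ is more careful than the paper, which leaves this implicit, but the argument is otherwise identical.
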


We now give more restrictions on $\sigma$-transformations.

One can easily see that the number of leaves in $T$ satisfies $\#\ell(T)=\#\ell(T')$ if and only if $X\neq \{v_s\}$. We call the $\sigma$-transformation the leaf $\sigma$-transformation if $X\neq \{v_s\}$, and denote it by $\sigma_\ell$-transformation. Similarly, we denote the inverse of $\sigma$-transformation by $\sigma_\ell^{-1}$-transformation if $X$ is not a singleton graph. Thus, we are concerned with the Steiner $(k,l)$-eccentricity on trees under the condition of a given number of leaves. However, it is unnecessary  to illustrate the existence of $\sigma_\ell$-transformation, as it can be equivalently replaced by $(p,q)$-transformation, which is depicted in the following section.

One can select a diametrical path $P(u,v)$ such that $u$ is in $Y$ and $v$ is in $X\cup Z$. We call the $\sigma$-transformation under this restriction the diametrical $\sigma$-transformation and simply denote it by $\sigma_d$-transformation. For the $\sigma_d^{-1}$-transformation, we can select one endpoint of a diametrical path to be in $Y$ and the other endpoint of the diametrical path to be in $T'\backslash Y$. The existence of $\sigma_d$-transformation is plain, since we can let the pendant edge containing $v$  be $X\cup Z$. For the existence of $\sigma_d^{-1}$-transformation, let the branching vertex $s$ nearest $v$ be $v_0$, such that $d(s,v)\leq d(s,u)$. And then set $X\cup P=P(v,s)$. Note that $\#V(P)>0$. Hence we have $\#V(Y)\geq \#V(X)$, as desired.

One can check that the maximum degree of $T$, $\Delta(T)$, does not decrease if $X=\{v_s\}$ or is a path. We denote the $\sigma$-transformation by $\sigma_\delta$-transformation if  $X=\{v_s\}$ or is a path, and call it the degree $\sigma$-transformation. For the $\sigma_\delta^{-1}$-transformation, we also let $X=\{v_s\}$ or a path. By directly taking $X=\{v_s\}$, we can confirm the  existence of  $\sigma_\delta$-transformation and $\sigma_\delta^{-1}$-transformation.
\section{$(p,q)$-transformations for trees}\label{sec4}

The $(p,q)$-transformation was first investigated by Ili\'c \cite{ilic2012extremal}. Let $w$ be a vertex of tree $T$. For given positive integers $p$ and $q$, let $T(p,q)$ be the tree from $T$ by attaching to $w$ two pendant paths of lengths $p$ and $q$. We say that $T(p+1,q-1)$ comes from $T(p,q)$ by $(p,q)$-transformation; please refer to Figure \ref{Fig2}. 

\begin{figure}[h!]
	\includegraphics[width=1\textwidth]{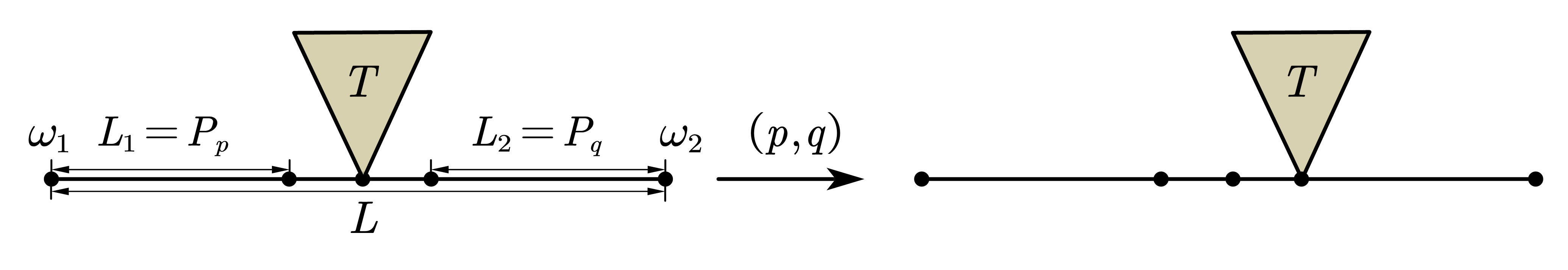}
	\caption{$(p,q)$-transformation}\label{Fig2}
\end{figure}

\begin{lemma}\label{lem5}
We have $\bar{\varepsilon}_{k,l}(T(p,q))> \bar{\varepsilon}_{k,l}(T(p+1,q-1))$, where $q\geq p+2$ and $n>k>l$.
\end{lemma}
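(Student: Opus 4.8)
The plan is to compare the two trees $T(p,q)$ and $T(p+1,q-1)$ set-by-set, summing the differences $\varepsilon_{k,l}(S;T(p,q))-\varepsilon_{k,l}(S;T(p+1,q-1))$ over all $l$-sets $S$, and to show this total is strictly positive. Write $T=T(p,q)$, let $P_p$ and $P_q$ be the two pendant paths at $w$ (of lengths $p$ and $q$, with $q\geq p+2$), and let $R$ denote the rest of the tree hanging off $w$. The $(p,q)$-transformation shortens the long path to length $q-1$ and lengthens the short one to length $p+1$, so the order $n$ is preserved and only the relative balance of the two arms changes. Because the Steiner $(k,l)$-eccentricity is computed by greedily filling in the $k-l$ eccentric vertices as leaves farthest from $S$ (Proposition \ref{prop1}), for most position-types of $S$ the optimal Steiner $(k,l)$-tree already spans both arms in full, so the eccentricity equals the same value in both trees and contributes $0$ to the difference.

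First I would classify the $l$-sets $S$ according to how they meet the three regions $V(P_p)\setminus\{w\}$, $V(P_q)\setminus\{w\}$, and $V(R)$, in the spirit of the Iverson-vector bookkeeping used in Lemmas \ref{lem3} and \ref{lem4}. The decisive observation is that when $k-l$ is large enough that every optimal Steiner tree must include both pendant paths entirely, the contribution cancels; the genuine differences come from sets $S$ for which the optimizer chooses only \emph{one} of the two arms as its eccentric reach. For such a set, the eccentric vertex is pushed to the far end of whichever arm is longer, so in $T(p,q)$ the reach into the long arm has length $q$ whereas after the transformation the corresponding long arm has length only $q-1$; symmetrically the short arm grows from $p$ to $p+1$. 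The key numerical input is the asymmetry $q\geq p+2$: whenever a set $S$ concentrated near $w$ (or in $R$) can reach farthest along exactly one arm, the longer arm is strictly preferred, and shrinking it strictly decreases that set's eccentricity while the matching short-arm sets only gain by the same or a smaller amount, because $p+1\le q-1$. Pairing these configurations against each other, the balance tips strictly in favor of $T(p,q)$.

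I would organize the count by introducing, for each $l$-set $S$, the quantity $a=d(S,\text{far end of }P_q)$ versus $b=d(S,\text{far end of }P_p)$ and tracking how the choice of the $k-l$ eccentric leaves redistributes after the edge is shifted; the crucial inequality reduces to comparing how many sets strictly prefer the long arm before versus after, and here $q-1\ge p+1$ guarantees the long arm remains at least as long, so no set flips disadvantageously. Summing the nonnegative per-set differences and exhibiting at least one set (for instance an $S$ contained in $R\cup\{w\}$ whose single eccentric leaf is forced to the end of the long path, which is strictly shorter in $T(p+1,q-1)$) for which the difference is strictly positive yields the strict inequality $\sum_S \varepsilon_{k,l}(S;T(p,q)) > \sum_S \varepsilon_{k,l}(S;T(p+1,q-1))$, and dividing by $\binom{n}{l}$ gives the claim.

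The main obstacle I expect is making the per-set accounting airtight when $k-l\ge 2$, because then the optimal Steiner tree may use both arms simultaneously and the ``choice of arm'' picture is no longer binary; one must verify that the combined use of both arms produces exactly offsetting $+1/-1$ contributions (length gained on the short arm, length lost on the long arm) so that only the genuinely single-arm configurations survive in the sum. Handling this cleanly requires carefully enumerating which eccentric leaves sit on each arm in the optimizer for both trees and confirming that the multiplicity of sets realizing each configuration is governed by symmetric binomial counts that do not upset the strict sign; establishing that at least one configuration contributes strictly positively (and none strictly negatively) under $q\ge p+2$ is the heart of the argument.
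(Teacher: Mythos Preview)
Your plan follows the same overall strategy as the paper (case analysis on where $S$ and the eccentric vertices lie, then a pairing/counting argument), but there is a real gap. You write ``summing the nonnegative per-set differences,'' yet the per-set differences $\varepsilon_{k,l}(S;T(p,q))-\varepsilon_{k,l}(S;T(p+1,q-1))$ are \emph{not} all nonnegative. Under the natural vertex bijection, whenever $S$ meets the short arm $L_1$ but not the long arm $L_2$ and all $k-l$ eccentric vertices lie outside both arms, the image of the optimal Steiner tree gains an edge after the transformation (the vertices of $S\cap L_1$ sit one step farther from $w$), so the eccentricity strictly \emph{increases}. The paper isolates exactly this configuration as its case~(1c), pairs it against the mirror case~(1d) ($S$ meets $L_2$ but not $L_1$, eccentricity drops by one), and proves $\#\mathcal S_c\le\#\mathcal S_d$ by introducing the auxiliary quantity $\alpha(S,T)=\max\{d(u,v):u\in S,\ v\in\ell(T),\ P(u,v)\cap S=\{u\}\}$ and using $q\ge p+2$ to compare the admissible ranges of $\alpha$. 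Your ``pairing'' remark points in the right direction, but it must be turned into an explicit injection; asserting nonnegativity set-by-set and then exhibiting one strictly positive term is not a valid conclusion here.

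A secondary issue: the paper's primary case split is on how $EV(T_{k,l}(S))$ meets the two arm-leaves $\{\omega_1,\omega_2\}$ (neither, both, or exactly one), with the position of $S$ only as a sub-split. This is cleaner than organizing by the position of $S$ alone, because it is the arm-leaves whose role changes under the transformation. In particular, your anticipated $k-l\ge2$ obstacle dissolves in this organization: when both $\omega_1$ and $\omega_2$ are eccentric the $+1$ and $-1$ cancel immediately, and the strict inequality is produced entirely by the ``exactly $\omega_2$ is eccentric and $S\cap L_1=\emptyset$'' subcase, not by the single-eccentric-leaf scenario you sketch.
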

\begin{proof}
Let $\alpha(S,T)=\max\{d(u,v): u\in S~ \text{and}~ \exists v\in \ell(T),~ \text{such that}~P(u,v)\cap S=\{u\} \}$. We consider the following cases.
\begin{enumerate}
	\item $\exists T_{k,l}(S)$ such that $EV(T_{k,l}(S))\cap L= \emptyset$.  
	\begin{enumerate}
		\item $S\cap L=\emptyset$.
		\item $S\cap L_1\neq \emptyset$, $S\cap L_2\neq \emptyset$.
		
		Obviously, the above two subcases do not change $|T_{k,l}(S)|$.
		
		\item $S\cap L_1\neq \emptyset$, $S\cap L_2= \emptyset$. We have $\alpha(S,T)\geq q$ and $|T'_{k,l}(S)|=|T_{k,l}(S)|+1$. Let $\mathcal S_c$ be the set of all $k$-sets which satisfy this subcase.
		
		\item  $S\cap L_1= \emptyset$, $S\cap L_2\neq \emptyset$. We have $\alpha(S,T)\geq\max\{p, d(S,\omega_2)\}$. Moreover, 
		\begin{equation*}
		|T'_{k,l}(S)|=	\begin{cases}
				|T_{k,l}(S)|, &\alpha(S,T)=p\geq d(S,\omega_2),\cr
					|T_{k,l}(S)|-1, &\text{else}.
			\end{cases}
		\end{equation*}
		Let $\mathcal S_d$ be the set of all $k$-sets which satisfy this subcase and make $|T'_{k,l}(S)|=|T_{k,l}(S)|-1$ hold.
	\end{enumerate}
	Recall that $q\geq p+2$. This means that set $\{S: \alpha(S,T)\geq q\}$ is a subset of $\{S: \alpha(S,T)\geq \max\{p,q-1\}\}\backslash \{S: \alpha(S,T)=p\geq d(S,\omega_2)\}$. Moreover, for fixed $S\cap T$, where $S\subseteq \{S: \alpha(S,T)\geq q\}$ and $\#(S\cap T)<k$,  the cardinality of $\{S: S\cap L_1\neq\emptyset \}$ is less than that of $\{S: S\cap L_2\neq\emptyset \}$. We immediately get $\#\mathcal{S}_c\leq\#\mathcal{S}_d$.
		
	Hence the total sum of this case satisfies
	\begin{equation*}
		\sum \varepsilon_{k,l}(S;T(p,q))\geq 	\sum \varepsilon_{k,l}(S;T(p+1,q-1)).
	\end{equation*}
	\item  $\exists T_{k,l}(S)$ such that $EV(T_{k,l}(S))\cap L=\{\omega_1,\omega_2\}$. Note that $d(\omega_1,S)\geq \alpha(S,T)$. If $S\cap L_2\neq \emptyset$, then  $|T_{k,l}(S)|$ does not change. If $S\cap L_2= \emptyset$, then $|T_{k,l}(S)|$ does not change due to $d(\omega_2,S)=q>p\geq d(\omega_1,S)$.
	\item $\forall T_{k,l}(S)$ such that $EV(T_{k,l}(S))\cap L= \{\omega_1\}$ or $\{\omega_2\}$. 
	\begin{enumerate}
		\item $\omega_1$ and $\omega_2$ can be both obtained. We thus have $d(\omega_1,S)=d(\omega_2, S)$ and $S\cap L_2\neq \emptyset$. Hence  $|T_{k,l}(S)|$ does not change.
		\item  $\forall T_{k,l}(S)$, such that $EV(T_{k,l}(S))\cap L= \{\omega_1\}$. It must have $S\cap L_2\neq \emptyset$ and hence $|T'_{k,l}(S)|=|T_{k,l}(S)|$.
		\item  $\forall T_{k,l}(S)$, such that $EV(T_{k,l}(S))\cap L= \{\omega_2\}$. We have
		$d(\omega_2, S)>d(\omega_1,S)$, $d(\omega_2,S)\geq \alpha(S,T)$, and  $\#\{u: d(u,S)=d(\omega_2,S), u\in\ell(T)\}\leq k-l-1$. If $S\cap L_1\neq\emptyset$, then $|T'_{k,l}(S)|=|T_{k,l}(S)|$; if $S\cap L_1=\emptyset$, then 	$|T'_{k,l}(S)|=|T_{k,l}(S)|-1$.
	\end{enumerate}
Hence the total sum of this case satisfies
	\begin{equation*}
		\sum \varepsilon_{k,l}(S;T(p,q))>	\sum \varepsilon_{k,l}(S;T(p+1,q-1)).
	\end{equation*}
\end{enumerate}
Considering  all the above situations and taking the arithmetic mean, we obtain  $\bar{\varepsilon}_{k,l}(T(p,q))> \bar{\varepsilon}_{k,l}(T(p+1,q-1))$ and the proof is complete.
\end{proof}
\begin{remark}
Note that this conclusion may not hold for general graphs since $\alpha(S,T)$ depends on Proposition \ref{prop1}, or more essentially, $\alpha(S,T)$ relies on the acyclicity of trees. 
\end{remark}
\begin{lemma}\label{lem6}
We have $\bar{\varepsilon}_{k,k}(T(p,q))> \bar{\varepsilon}_{k,k}(T(p+1,q-1))$, where $q\geq p+2$ and $n>k$.
\end{lemma}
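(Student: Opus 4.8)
The plan is to reduce the statement to the Steiner $k$-Wiener index and then evaluate that index through an edge-counting formula. Since $\bar{\varepsilon}_{k,k}(G)=\binom{n}{k}^{-1}SW_k(G)$ and both trees have the same order $n$, the claim $\bar{\varepsilon}_{k,k}(T(p,q))>\bar{\varepsilon}_{k,k}(T(p+1,q-1))$ is equivalent to $SW_k(T(p,q))>SW_k(T(p+1,q-1))$. Because $k=l$ forces $EV(T_{k,k}(S))=\emptyset$, we have $\varepsilon_{k,k}(S;T)=d_T(S)$, the size of the minimal Steiner tree of $S$, so no eccentric-vertex bookkeeping (the $\bm{x}$-vector case analysis of Lemma \ref{lem5}) is needed and I can work directly with Steiner distances.

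First I would record the standard edge-decomposition of $SW_k$ on a tree: for each edge $e$, deleting $e$ splits the tree into two parts of sizes $a_e$ and $n-a_e$, and the Steiner tree of a $k$-set $S$ uses $e$ exactly when $S$ meets both parts. Hence
\begin{equation*}
SW_k(T)=\sum_{e\in E(T)}\Big[\binom{n}{k}-\binom{a_e}{k}-\binom{n-a_e}{k}\Big],
\end{equation*}
with the convention $\binom{m}{k}=0$ for $m<k$. Writing $g(c)=\binom{n}{k}-\binom{c}{k}-\binom{n-c}{k}$, the $i$-th edge of the pendant path of length $p$ cuts off $c=p-i+1$ vertices, so that path contributes $\sum_{c=1}^{p}g(c)$, and likewise the length-$q$ path contributes $\sum_{c=1}^{q}g(c)$.

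Next I would observe that every edge $e$ of the base tree $T$ keeps the same two component sizes under the transformation, because the side of $e$ containing $w$ always gains exactly $p+q=(p+1)+(q-1)$ vertices; thus all core-edge contributions cancel. Subtracting the path contributions then telescopes to
\begin{equation*}
SW_k(T(p,q))-SW_k(T(p+1,q-1))=g(q)-g(p+1)=h(p+1)-h(q),
\end{equation*}
where $h(c)=\binom{c}{k}+\binom{n-c}{k}$. So everything comes down to proving $h(p+1)>h(q)$. To this end I would analyze $h$: it is symmetric about $n/2$, and its forward difference is $h(c+1)-h(c)=\binom{c}{k-1}-\binom{n-1-c}{k-1}$, which is negative precisely when $c<(n-1)/2$; hence $h$ strictly decreases toward the centre and increases away from it, so $h(c_1)>h(c_2)$ whenever $c_1$ is strictly farther from $n/2$ than $c_2$. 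Since $q\ge p+2$ gives $p+1<q$, and since $w$ together with the two pendant paths occupies $p+q+1\le n$ vertices, one gets $p+1<(p+1+q)/2\le n/2$; comparing distances to the centre then yields $h(p+1)>h(q)$ in both regimes $q\le n/2$ and $q>n/2$.

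The main obstacle is the strictness in the regime $q>n/2$: there the inequality $q-n/2\le n/2-(p+1)$ rests on $p+1+q\le n$, and it degenerates to an equality exactly when $p+q+1=n$, i.e.\ when the base tree is the single vertex $w$ and both $T(p,q)$ and $T(p+1,q-1)$ are the same path $P_n$. I would dispose of this by noting that a genuine $(p,q)$-transformation is carried out at a branching vertex, so the base tree is nontrivial, $p+q+1<n$, and the inequality is strict; this is the only point at which the nondegeneracy of the configuration is essential, and it is where I expect the delicate care to lie.
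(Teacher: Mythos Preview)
Your reduction to the Steiner Wiener index and the edge-decomposition formula is a genuinely different route from the paper's case analysis on the $k$-sets $S$, and your derivation of
\[
SW_k(T(p,q))-SW_k(T(p+1,q-1))=h(p+1)-h(q),\qquad h(c)=\binom{c}{k}+\binom{n-c}{k},
\]
is clean and correct; it immediately gives the non-strict inequality $\ge$ with far less bookkeeping than the paper's argument.

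The gap is in your strictness analysis. You assert that $h$ ``strictly decreases toward the centre'', but the forward difference
\[
h(c+1)-h(c)=\binom{c}{k-1}-\binom{n-1-c}{k-1}
\]
vanishes whenever \emph{both} binomials are zero, namely for every integer $c$ with $n-k<c<k-1$. Thus, as soon as $n\le 2k-2$, the function $h$ has a flat plateau $h\equiv 0$ on the whole interval $(n-k,k)$, and if $p+1$ and $q$ both land there you get $h(p+1)=h(q)=0$ even though $p+q+1<n$. Concretely, take $n=10$, $k=9$, $p=1$, $q=3$ and any base tree on six vertices: then $h(2)=\binom{2}{9}+\binom{8}{9}=0=h(3)$, so $SW_9(T(1,3))=SW_9(T(2,2))$. (One can check this directly: for $k=n-1$ one has $SW_{n-1}=n(n-1)-\#\ell$, and both trees have the same number of leaves.) Your claim that $p+q+1=n$ is the \emph{only} degeneracy is therefore wrong, and the ``branching vertex'' fix you propose does not rescue the argument.

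In fact this shows that the strict inequality in the lemma fails in general; only $\bar\varepsilon_{k,k}(T(p,q))\ge\bar\varepsilon_{k,k}(T(p+1,q-1))$ is true without further hypotheses (for instance $n\ge 2k$, or $p+1\le n-k$). The paper's own case analysis is less explicit about where strictness is coming from and appears to share this oversight; your edge-formula approach has the virtue of exposing the issue precisely.
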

\begin{proof}
One can easily see that  $|T'_{k,l}(S)|=|T_{k,l}(S)|$ if one of the following conditions holds:
\begin{enumerate}
	\item $T_{k,k}(S)\cap L_1=T_{k,k}(S)\cap L_2=\emptyset$;
	\item $T_{k,k}(S)\cap L_1\neq\emptyset$, $T_{k,k}(S)\cap L_2\neq\emptyset$.
\end{enumerate}

We now consider two cases:
\begin{enumerate}
	\item[(3)]  $T_{k,k}(S)\cap L_1\neq\emptyset$, $T_{k,k}(S)\cap L_2=\emptyset$. 	We have $|T'_{k,l}(S)|=|T_{k,l}(S)|+1$.
	\item[(4)]  $T_{k,k}(S)\cap L_1=\emptyset$, $T_{k,k}(S)\cap L_2\neq\emptyset$.  We have $|T'_{k,l}(S)|=|T_{k,l}(S)|-1$.
\end{enumerate}
For fixed $S\cap T\subsetneqq S$, the choices of vertices in $\{u: u\in L_1\cap S\}$ are strictly less than those in  $\{u: u\in L_2\cap S\}$ due to $q\geq p+2$. It means that the total sum of $\varepsilon_{k,k}(S)$ in the above two cases is strictly decreased. Therefore, we obtain $\bar{\varepsilon}_{k,k}(T(p,q))> \bar{\varepsilon}_{k,k}(T(p+1,q-1))$.
\end{proof}
\begin{theorem}\label{thm3}
We have 
\begin{enumerate}
	\item  $\bar{\varepsilon}_{k,l}(T(p,q))> \bar{\varepsilon}_{k,l}(T(p+1,q-1))$, if $q\geq p+2$;
	\item  $\bar{\varepsilon}_{k,l}(T(p,q))= \bar{\varepsilon}_{k,l}(T(p+1,q-1))$, if $q=p+1$;
	\item  $\bar{\varepsilon}_{k,l}(T(p,q))\leq \bar{\varepsilon}_{k,l}(T(p+1,q-1))$, if $q\leq p$.
\end{enumerate}
\end{theorem}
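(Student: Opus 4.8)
The plan is to reduce all three parts to Lemmas \ref{lem5} and \ref{lem6} together with one elementary symmetry: both the tree $T(p,q)$ and the $(p,q)$-transformation are invariant under interchanging the names of the two pendant paths attached at $w$, so that $T(p,q)\cong T(q,p)$ for all admissible $p,q$. Granting this, part (1) requires no new work. When $k>l$ the strict inequality is exactly Lemma \ref{lem5}, and when $k=l$ it is exactly Lemma \ref{lem6}; since $l\leq k$ these two cases are exhaustive, and so $\bar{\varepsilon}_{k,l}(T(p,q))> \bar{\varepsilon}_{k,l}(T(p+1,q-1))$ whenever $q\geq p+2$.

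For part (2), where $q=p+1$, I would observe that $T(p+1,q-1)=T(p+1,p)$ arises from $T(p,q)=T(p,p+1)$ merely by swapping the two pendant paths at $w$; the two trees are therefore isomorphic and have equal average Steiner $(k,l)$-eccentricity, which is the asserted equality. For part (3), where $q\leq p$, I would apply part (1) in reverse after relabeling: reading $T(p+1,q-1)$ as $T(q-1,p+1)$, the two path lengths differ by $(p+1)-(q-1)=p-q+2\geq 2$, so part (1) applied to the pair $(q-1,p+1)$ yields $\bar{\varepsilon}_{k,l}(T(q-1,p+1))>\bar{\varepsilon}_{k,l}(T(q,p))$. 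Invoking $T(q-1,p+1)\cong T(p+1,q-1)$ and $T(q,p)\cong T(p,q)$ then rearranges this to $\bar{\varepsilon}_{k,l}(T(p,q))<\bar{\varepsilon}_{k,l}(T(p+1,q-1))$, which is in fact stronger than the stated $\leq$.

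The task is thus mostly bookkeeping, and there is no genuine analytic obstacle: all the delicate case analysis on the positions of $S$ and of the eccentric vertices relative to the two pendant paths has already been dispatched in Lemmas \ref{lem5} and \ref{lem6}, so Theorem \ref{thm3} is essentially their consolidation. The only points that merit care are to justify the path-swapping isomorphism cleanly and to handle the degenerate boundaries — in particular $q=p$, where the reduction in part (3) produces a gap of exactly $2$ and hence still a strict inequality, and the endpoint $q-1=0$, where one pendant path collapses and $T(p+1,q-1)$ must be read as the tree carrying a single lengthened pendant path.
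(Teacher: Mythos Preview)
Parts (1) and (2) of your argument coincide with the paper's: Lemmas~\ref{lem5} and~\ref{lem6} cover the two cases $k>l$ and $k=l$ in (1), and the isomorphism $T(p,p+1)\cong T(p+1,p)$ gives (2) by symmetry.

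For part (3) you take a genuinely different route. The paper does \emph{not} reduce (3) back to (1) via the relabelling $T(a,b)\cong T(b,a)$; instead it observes that when $q\le p$ the passage $T(p+1,q-1)\to T(p,q)$ is an instance of the $\sigma$-transformation of Section~\ref{sec3}, so that Theorem~\ref{thm1} yields the non-strict inequality directly. Your symmetry argument is more self-contained---it never leaves the $(p,q)$-framework and, as you point out, actually upgrades the conclusion to a strict inequality, which is sharper than what the paper states. The trade-off is the boundary $q=1$: there your relabelled pair is $(0,p+1)$, and Lemmas~\ref{lem5} and~\ref{lem6} were proved only for positive path lengths, so the appeal to part (1) is not yet justified. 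You flag this, but do not close it. The simplest patch is to treat $q=1$ separately by the paper's $\sigma$-transformation (take $v_0=w$, $Y$ the base tree, $P$ the first $p$ edges of the long pendant path, $X=\{v_s\}$, and $Z$ the final leaf); alternatively one can rerun the case analysis of Lemmas~\ref{lem5}--\ref{lem6} with one pendant path collapsed, which is routine. Either way the overall strategy is sound.
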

\begin{proof}
Case (1) comes from Lemmas \ref{lem5} and \ref{lem6}. Case (2) holds by using symmetry. Case (3) is a special case of $\sigma$-transformation.
\end{proof}
\section{Lower and upper bounds for general trees}\label{sec5}

\begin{figure}[h!]
	\includegraphics[width=1\textwidth]{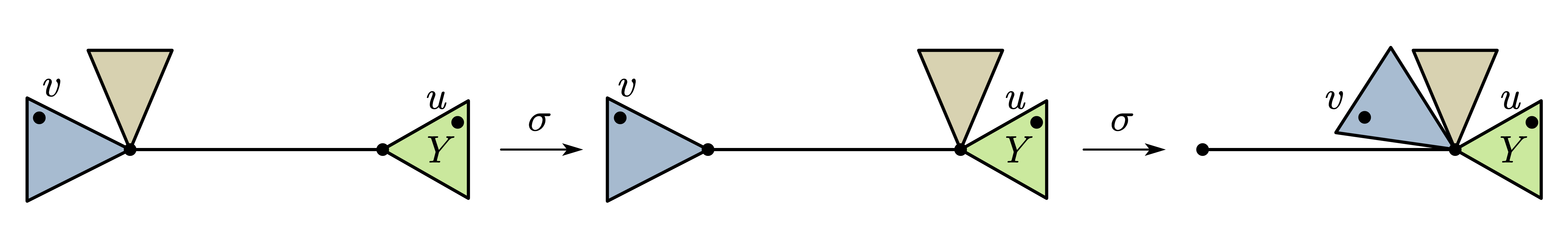}
	\caption{The number of diametrical paths decreases by one after at most two $\sigma_d$-transformations.}\label{Fig3}
\end{figure}
\begin{lemma}\label{lem7}
(1) The diameter of tree $T\ncong K_{1,n-1}$ strictly decrease after several $\sigma_d$-transformations. 

(2) The diameter of tree $T\ncong P_n$ strictly increases after several $\sigma_d^{-1}$-transformations.
\end{lemma}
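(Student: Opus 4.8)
The plan is to prove each part by tracking how a single $\sigma_d$-transformation (resp. $\sigma_d^{-1}$-transformation) affects the set of diametrical paths, and then to argue that iterating the transformation eventually forces a strict change in the diameter. Recall that in a $\sigma_d$-transformation we select a diametrical path $P(u,v)$ with $u\in Y$ and $v\in X\cup Z$, let the pendant edge containing $v$ play the role of $X\cup Z$, and move the subtree $Z$ from $v_s$ to $v_0$. Since $X\cup Z$ is a pendant edge, the effect of the move is to shorten every diametrical path that ends at $v$ while leaving the longest path inside $Y$ unchanged. The first step is therefore to show that after one $\sigma_d$-transformation \emph{either} the diameter strictly drops \emph{or} the number of diametrical paths realizing the (unchanged) diameter strictly decreases.

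First I would set up the combinatorial bookkeeping. Let $D=\operatorname{diam}(T)$ and let $\mathcal D$ be the collection of diametrical paths. Because $\varepsilon_Y(v_0)\geq\max\{\varepsilon_X(v_s),\varepsilon_Z(v_s)\}$ in the $\sigma$-form, every diametrical path must have one endpoint $u$ realizing $\varepsilon_Y(v_0)$ inside $Y$; its other endpoint lies in $X\cup Z$. The chosen $v$ lies on at least one such path. After moving $Z$ to $v_0$, any path that previously ran $u\rightsquigarrow v_0\rightsquigarrow v_s\rightsquigarrow v$ through $Z$ is shortened by the length $s$ of the transported segment, so it is no longer diametrical; meanwhile no new path can exceed length $D$, since the longest branch in $Y$ is unaffected and $Z$ has been relocated strictly closer to the heavy side. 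Hence $\operatorname{diam}(T')\le D$, and the diametrical paths of $T'$ are precisely a subset of those of $T$ that avoided the relocated edge. As Figure \ref{Fig3} suggests, at most two $\sigma_d$-transformations are needed to eliminate one diametrical path: the caption itself records this. The key step is thus to verify that $\#\mathcal D$ drops by at least one after at most two transformations, so no infinite stall is possible.

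The second step is the termination argument. Since $\#\mathcal D\ge 1$ is a positive integer and strictly decreases (within every block of at most two transformations), after finitely many $\sigma_d$-transformations we reach a tree with a \emph{unique} diametrical path, or more directly we reach the situation where the last path of length $D$ is destroyed; at that moment $\operatorname{diam}$ must drop below $D$, giving the strict decrease claimed in (1). Lemma \ref{lem1} guarantees that as long as $T\ncong K_{1,n-1}$ a $\sigma_d$-transformation is available, so the process does not halt prematurely before the diameter has had a chance to fall. For part (2), I would run the mirror argument: each $\sigma_d^{-1}$-transformation moves $Z$ away from the heavy side, lengthening a path whose endpoint can be chosen in $T'\setminus Y$, and by the existence construction given in the paragraph preceding this lemma (choosing the branching vertex $s$ nearest $v$ with $d(s,v)\le d(s,u)$ and setting $X\cup P=P(v,s)$) we always have an admissible $\sigma_d^{-1}$-transformation while $T'\ncong P_n$. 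A symmetric counting of diametrical paths shows the diameter is non-decreasing under each step and strictly increases after finitely many steps, until $T'\cong P_n$ is reached.

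The main obstacle I expect is the careful case analysis needed to show that a single $\sigma_d$-transformation can never \emph{increase} $\#\mathcal D$ nor create a new diametrical path of length exceeding $D$; one must rule out the possibility that relocating $Z$ to $v_0$ spawns fresh long paths on the $Y$-side. This is where the defining inequality $\varepsilon_Y(v_0)\geq\max\{\varepsilon_X(v_s),\varepsilon_Z(v_s)\}$ of the $\sigma$-form, together with the restriction $X\cup Z$ being a pendant edge, does the essential work: it pins the extremal endpoint inside $Y$ and bounds any newly created path by $\varepsilon_Y(v_0)+ (\text{distance from }v_0\text{ into the relocated }Z)\le D$. Making this bound tight enough to conclude the two-step decrease in $\#\mathcal D$, rather than merely a weak monotonicity, is the delicate point, and it is precisely the content summarized in the caption of Figure \ref{Fig3}.
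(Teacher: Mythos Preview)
Your approach is essentially the same as the paper's: both track the family $\mathcal D$ of diametrical paths, argue that $\#\mathcal D$ drops after a bounded number of $\sigma_d$-steps, and conclude that the diameter must strictly fall once $\mathcal D$ is exhausted; part (2) is handled symmetrically in each. You correctly identify the delicate point (ruling out new long paths after relocation) and correctly invoke the inequality $\varepsilon_Y(v_0)\geq\max\{\varepsilon_X(v_s),\varepsilon_Z(v_s)\}$ to control it.

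One small inconsistency worth fixing: you commit at the outset to the \emph{specific} existence construction in which $X\cup Z$ is the pendant edge at $v$, but then quote the ``at most two transformations'' bound from Figure~\ref{Fig3}. The paper's two-step bound arises because it allows the general $\sigma_d$-form and must handle the case $v\in X$ separately; under your pendant-edge choice $v$ always lies in $Z$, so a \emph{single} step already removes at least one diametrical path through $v$. Either keep the specific construction and sharpen the count to one step, or allow the general $\sigma_d$-form and do the two-case split as in the paper. Also, your sentence ``the diametrical paths of $T'$ are precisely a subset of those of $T$'' is a bit too strong: a relocated-$Z$-to-$X$ path of length $\varepsilon_Z(v_s)+s+\varepsilon_X(v_s)$ can equal $D$ when $\varepsilon_Y(v_0)=\varepsilon_Z(v_s)$, so you should argue only that $\#\mathcal D$ does not increase and that at least one path is destroyed, which is all the counting argument needs.
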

\begin{proof}
	We claim that the number of diametrical paths decreases by exactly one after no more than two $\sigma_d$-transformations. If the intersection of $Z$ and the selected diametrical path $P(u,v)$ is nonempty, then the number of diametrical paths decreases by exactly one after $\sigma_d$-transformation. Otherwise, $P(u,v)\cap X\neq \emptyset$ and thus we need to use $\sigma_d$-transformation twice, see Figure \ref{Fig3}. Since the number of diametrical paths is finite, we can apply $\sigma_d$-transformations to reduce the diameter until it becomes two. When $\operatorname{diam}(T)=2$, i.e. $T\cong K_{1,n-1}$, there are no choices of $Z$ in the $\sigma$-form of $T$. We thus establish the first conclusion.
	
	Since $T\ncong P_n$, it is clear that a diametrical path $P(u,v)$ passes through a branching vertex of $T$. We let this branching vertex be $v_0$ and the branch at $v_0$ with the maximum depth be $Y$. Now the endpoint $v$ of $P(u,v)$ is in $X\cup Z$. If $v\in Z$ and $v\notin X$, then the diameter of $T$ strictly increases after $\sigma_d$-transformation. If $v\in X$, we can move $Z$ along the path $P(v,v_s)$ to make the diameter of $T$ strictly increase by applying $\sigma_d^{-1}$-transformation. This procedure can proceed until $T\cong P_n$. When $T\cong P_n$, we have $\Delta(T)=2$, and thus the selection of $Z$ in the $\sigma$-form of $T$ can not be achieved.
\end{proof}
\begin{theorem}
Suppose that $T$ is a tree of order $n\geq k+1$. We have $$\bar{\varepsilon}_{k,l}(K_{1,n-1})\leq \bar{\varepsilon}_{k,l}(T)\leq \bar{\varepsilon}_{k,l}(P_n),$$ and the bounds are sharp.
\end{theorem}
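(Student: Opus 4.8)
The plan is to prove each bound by reducing an arbitrary tree $T$ to the appropriate extremal tree through a finite chain of transformations, each of which moves the average Steiner $(k,l)$-eccentricity in the required direction. The monotonicity of each step is supplied by Theorem \ref{thm1}, and the fact that the chain terminates at the desired extremal tree is supplied by Lemma \ref{lem7}. The observation tying these together is that a $\sigma_d$-transformation is merely a $\sigma$-transformation subject to an extra restriction, so Theorem \ref{thm1} applies verbatim to every step of the chain produced by Lemma \ref{lem7}.

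For the lower bound I would first dispose of the trivial case $T\cong K_{1,n-1}$, where equality holds. Otherwise Lemma \ref{lem7}(1) guarantees a finite sequence $T=T_0,T_1,\ldots,T_m$ of $\sigma_d$-transformations whose diameters strictly decrease until $T_m\cong K_{1,n-1}$, the unique tree of order $n$ with diameter $2$. Since each $T_{i+1}$ arises from $T_i$ by a $\sigma$-transformation, each $T_i$ with $i<m$ has diameter exceeding $2$ and is therefore not $K_{1,n-1}$, and $n>k\geq l$, Theorem \ref{thm1} gives $\bar\varepsilon_{k,l}(T_i)\geq\bar\varepsilon_{k,l}(T_{i+1})$. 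Chaining these inequalities yields
\begin{equation*}
\bar\varepsilon_{k,l}(T)=\bar\varepsilon_{k,l}(T_0)\geq\bar\varepsilon_{k,l}(T_1)\geq\cdots\geq\bar\varepsilon_{k,l}(T_m)=\bar\varepsilon_{k,l}(K_{1,n-1}).
\end{equation*}

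For the upper bound the argument is dual. Assuming $T\ncong P_n$ (else equality holds), Lemma \ref{lem7}(2) produces a finite sequence $T=T_0',T_1',\ldots,T_{m'}'$ of $\sigma_d^{-1}$-transformations whose diameters strictly increase until $T_{m'}'\cong P_n$, the unique tree of order $n$ with diameter $n-1$. Because $\sigma^{-1}$ is the inverse of $\sigma$, each step $T_i'\to T_{i+1}'$ can be read as $T_i'=\sigma(T_{i+1}')$, so Theorem \ref{thm1} gives $\bar\varepsilon_{k,l}(T_{i+1}')\geq\bar\varepsilon_{k,l}(T_i')$; this is precisely the content of Theorem \ref{thm2}. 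Chaining gives $\bar\varepsilon_{k,l}(T)\leq\bar\varepsilon_{k,l}(P_n)$. Sharpness of both bounds is then immediate, since $K_{1,n-1}$ and $P_n$ are themselves trees of order $n$ attaining equality.

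Once the heavy lifting of Theorem \ref{thm1} and Lemma \ref{lem7} is in place, the proof is essentially a bookkeeping argument, and the only point requiring care is the \emph{termination} of each chain. The main subtlety is to confirm that the diameter, an integer confined to $\{2,3,\ldots,n-1\}$, is genuinely strictly monotone under the chosen transformations, so that the chains are finite and actually reach the endpoints $K_{1,n-1}$ and $P_n$; this is exactly what Lemma \ref{lem7} secures, including the remark that one may need two consecutive $\sigma_d$-transformations to eliminate a single diametrical path.
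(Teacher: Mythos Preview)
Your proposal is correct and follows essentially the same approach as the paper: repeatedly apply $\sigma_d$-transformations (resp.\ $\sigma_d^{-1}$-transformations), invoke Lemma~\ref{lem7} for termination at $K_{1,n-1}$ (resp.\ $P_n$), and use Theorem~\ref{thm1} (resp.\ Theorem~\ref{thm2}) for monotonicity at each step. Your write-up is in fact more explicit than the paper's about the termination argument and about why Theorem~\ref{thm1} applies to each intermediate tree, but the underlying strategy is identical.
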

\begin{proof}
By Lemma \ref{lem7}, we apply $\sigma_d$-transformation on $T$ again and again until its diameter becomes $2$. Hence, $T$ is transformed to be a star. From Theorem \ref{thm1}, we have $\bar{\varepsilon}_{k,l}(K_{1,n-1})\leq \bar{\varepsilon}_{k,l}(T)$.

Similarly, using $\sigma_d^{-1}$-transformation on $T$ several times, $T$ switches into a path. From Theorem \ref{thm2}, we have $\bar{\varepsilon}_{k,l}(T)\leq \bar{\varepsilon}_{k,l}(P_n)$.
\end{proof}

Since the number of leaves keeps constant after $\sigma_\ell$-transformation, it is attractive to consider a tree with a fixed number of leaves. 

A starlike tree is a tree with exactly one branching vertex, and we denote the degree of this branching vertex as $p$. Note that the maximum  degree of starlike tree is also $p$. Concisely, the starlike tree $S(n_1,n_2,\ldots,n_p)$ has a branching vertex $v$ such that 
\begin{equation*}
	S(n_1,n_2,\ldots,n_p)-v=\bigcup\limits_{i=1}^pP_{n_i},
\end{equation*}
where $n_1\geq n_2\geq \cdots\geq n_p$. 

For the pendant paths of the starlike tree, if they pairwise differ by at most one, then we call the starlike tree balanced and denote it by $BS(n,p)$. More generally, a balanced starlike tree is uniquely determined by its order $n$ and the maximum degree $p$.
See Figure \ref{Fig4} as an example.

\begin{figure}[h!]
	\includegraphics[width=0.25\textwidth]{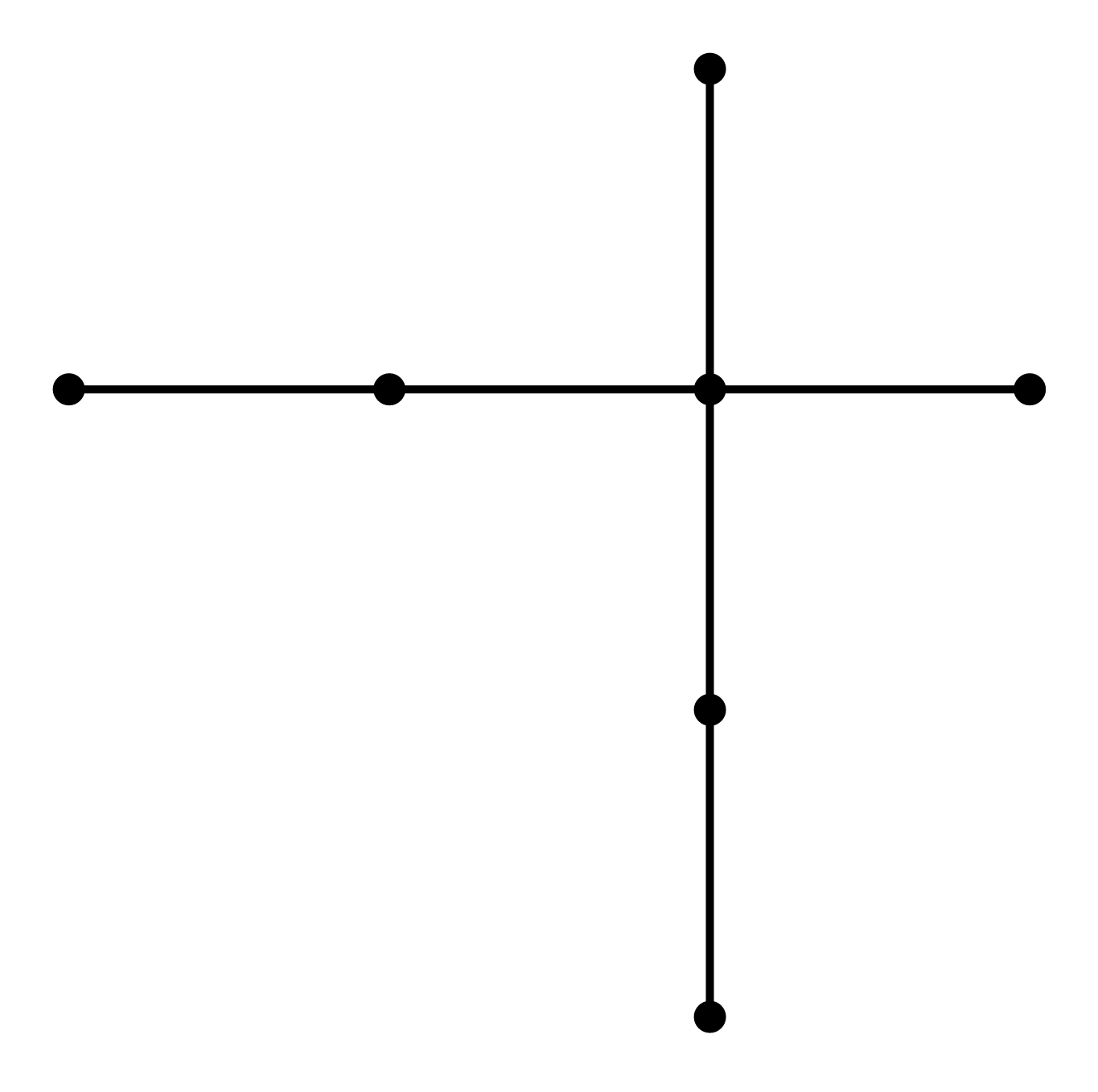}
	\caption{$BS(7,4)$}\label{Fig4}
\end{figure}

\begin{theorem}\label{thm5}
If $T$ is tree with $n$ vertices and $p$ leaves, then $$\bar{\varepsilon}_{k,l}(T)\geq \bar{\varepsilon}_{k,l}(BS(n,p)).$$
\end{theorem}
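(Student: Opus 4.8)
The plan is to convert an arbitrary tree $T$ with $n$ vertices and $p$ leaves into $BS(n,p)$ through a finite chain of \emph{leaf-preserving} transformations, each of which leaves the number of leaves equal to $p$ and does not increase $\bar\varepsilon_{k,l}$; summing the resulting inequalities gives $\bar\varepsilon_{k,l}(T)\ge\bar\varepsilon_{k,l}(BS(n,p))$. I would carry this out in two stages: first drive $T$ to \emph{some} starlike tree with $p$ leaves using $\sigma_\ell$-transformations, and then balance its pendant paths using $(p,q)$-transformations. The case $p=2$ is immediate, since then $T\cong P_n\cong BS(n,2)$, so I may assume $p\ge 3$.

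\textbf{Stage 1 (reduction to a starlike tree).} As long as $T$ has more than one branching vertex I want to exhibit a single $\sigma_\ell$-transformation that preserves the leaf count and, after boundedly many steps, strictly decreases the number of branching vertices. The idea is to pick a branching vertex $v_s$ whose branches pointing away from the rest of the tree are all pendant paths (a deepest branching vertex), take $v_0$ to be the nearest branching vertex to $v_s$ lying on the path toward a branch of maximal depth, let $Y$ be that deep branch at $v_0$, and let $X,Z$ be branches at $v_s$. Moving $Z$ from $v_s$ to $v_0$ is a $\sigma_\ell$-transformation provided the three defining constraints hold: $\varepsilon_Y(v_0)\ge\max\{\varepsilon_X(v_s),\varepsilon_Z(v_s)\}$, $\#V(Y)\ge\#V(X)$, and $X\neq\{v_s\}$ (the last being exactly the condition that makes the move leaf-preserving, so that $\#\ell(\cdot)=p$ throughout). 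Each such move does not increase $\bar\varepsilon_{k,l}$ by Theorem \ref{thm1}, and after at most $\deg(v_s)-2$ moves $v_s$ stops being branching, so the number of branching vertices drops; iterating terminates at a starlike tree $S(n_1,\dots,n_p)$ with $n_1\ge\cdots\ge n_p$ and $\sum_i n_i=n-1$.

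\textbf{Stage 2 (balancing the pendant paths).} Let $w$ be the unique branching vertex of $S(n_1,\dots,n_p)$. Whenever two pendant paths have lengths differing by at least $2$, I apply the $(p,q)$-transformation at $w$; by Theorem \ref{thm3}(1) this keeps $n$ and the leaf count $p$ fixed while \emph{strictly} decreasing $\bar\varepsilon_{k,l}$. Since each such step strictly decreases $\sum_i n_i^2$, the procedure terminates, and its terminal state---the unique starlike tree on $n$ vertices with $p$ pendant paths whose lengths pairwise differ by at most one---is exactly $BS(n,p)$. Chaining Stages 1 and 2 yields $\bar\varepsilon_{k,l}(T)\ge\bar\varepsilon_{k,l}\big(S(n_1,\dots,n_p)\big)\ge\bar\varepsilon_{k,l}(BS(n,p))$.

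\textbf{Main obstacle.} I expect Stage 1 to be the hard part: the real content is to guarantee that, for \emph{every} non-starlike $T$, the vertices $v_0,v_s$ and branches $X,Y,Z$ can be chosen so that the three $\sigma_\ell$-constraints hold \emph{simultaneously}. The depth constraint $\varepsilon_Y(v_0)\ge\max\{\varepsilon_X(v_s),\varepsilon_Z(v_s)\}$ dictates that branches be moved \emph{toward} the deeper side, while the size constraint $\#V(Y)\ge\#V(X)$ can pull in the opposite direction, and a naive choice of the deepest branching vertex can violate one of them; reconciling the two (and proving termination) is where the argument must be done with care. This is precisely the point at which, following the remark after the definition of $\sigma_\ell$, the existence of the required leaf-preserving move is most cleanly secured by realizing it through the $(p,q)$-transformation of Section \ref{sec4}, whose monotonicity is already established in Theorem \ref{thm3}. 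Stage 2, by contrast, is routine given Theorem \ref{thm3}.
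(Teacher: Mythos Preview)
Your two-stage plan---drive $T$ to a starlike tree via leaf-preserving $\sigma_\ell$-moves, then balance via $(p,q)$-transformations---is exactly the paper's approach, and you have correctly located the only delicate point (simultaneously meeting the depth, size, and $X\neq\{v_s\}$ constraints in Stage~1). The paper's execution of Stage~1 differs tactically: it picks two \emph{nearest} branching vertices $x,y$, lets the side with greater depth play the role of $Y$ at $v_0=y$, takes $X$ to be a single short pendant path at $v_s=x$, and bundles \emph{all} remaining branches at $x$ into $Z$, so one move already drops $\deg(x)$ to $2$; the paper's justification of the size constraint (``a pendant path with a reasonably small length'') is itself terse, so your caution there is well placed.
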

\begin{proof}
We consider $p\geq 3$, and thus tree $T$ has at least one branching vertex.

\begin{enumerate}
	\item  Assume that tree $T$ has no less than $2$ branching vertices. Our aim is to decrease the number of branching vertices to exactly one.  We choose two nearest branching vertices $x$ and $y$. Clearly, $x$ and $y$ are connected by a path $P$. Without loss of generality, let $y$ be the root owning the maximum depth of subtree in $T\backslash P$ and then let $x$ be the endpoint of a pendant path with a reasonably small length. This can be achieved because if it cannot be done, then the branching vertices on those branches of $x$, which contain pendant paths, can be selected as the new $x$. We now let one of the pendant paths of $x$ be $X$ in the $\sigma$-form of $T$. The choices of $Y$ and $Z$ are now obvious. 
	
	After $\sigma_\ell$-transformation, the degree of $x$ becomes two, i.e., the number of branching vertices decrease by one. Since $T$ owns finite branching vertices, we can continue this procedure until $T$ has exactly one branching vertex.
	\item  Assume that tree $T$ has exactly one branching vertex and is unbalanced. Thus $T\cong S(n_1,n_2,\ldots, n_p)$ is an unbalanced starlike tree. Our aim is to switch the unbalanced starlike tree into a balanced starlike tree. Let $i_1$ and $i_2$ be the minimum and maximum value of index $i$ in sequence $\{n_i\}$ in turn, such that $n_{i_1}-n_{i_2}\geq 2$. We now apply $(p,q)$-transformation on $T$ to transform $T$ into $S(n_1,\ldots, n_{i_1}-1,\ldots, n_{i_2}+1,\ldots,n_p)$. This operation can be repeatedly employed until $T$ becomes balanced.
\end{enumerate}
According to Theorems \ref{thm1} and \ref{thm3}, all the operations to transform $T$ into a balanced starlike tree do not decrease the average Steiner $(k,l)$-eccentricity. Therefore, we conclude that $\bar{\varepsilon}_{k,l}(T)\geq \bar{\varepsilon}_{k,l}(BS(n,p))$.
\end{proof}

We next present the criteria for checking the average Steiner $(k,l)$-eccentricity of starlike trees. Let $x=(x_1,x_2,\ldots, x_p)$ and $y=(y_1,y_2,\ldots,y_p)$ be two $p$-tuples of positive integers. We write $y\prec x$ if $x$ and $y$ satisfy the following conditions:
\begin{enumerate}
	\item $x_1\geq x_2\geq\cdots\geq x_p$ and  $y_1\geq y_2\geq\cdots\geq y_p$;
	\item $\sum_{i=1}^kx_i\geq \sum_{i=1}^ky_i$, for every $1\leq k<p$;
	\item $\sum_{i=1}^px_i= \sum_{i=1}^py_i$.
\end{enumerate}
\begin{theorem}\label{thm6}
	Let $x=(x_1,x_2,\ldots, x_p)$ and $y=(y_1,y_2,\ldots,y_p)$ be two $p$-tuples with $p\geq 2$, such that $y\prec x$ and $n-1=\sum_{i=1}^px_i= \sum_{i=1}^py_i$. Then 
	\begin{equation}\label{eq:thm8}
		\bar{\varepsilon}_{k,l}(S(x))\geq \bar{\varepsilon}_{k,l}(S(y)),
	\end{equation}
	with equality if and only if $S(x)\cong S(y)$.
\end{theorem}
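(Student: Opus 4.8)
The plan is to realize the majorization relation $y \prec x$ as a finite chain of $(p,q)$-transformations based at the unique branching vertex of the starlike tree, and then to apply Theorem \ref{thm3}. Since $S(x)$ is starlike, its pendant paths, of lengths $x_1 \geq \cdots \geq x_p$, all emanate from the single branching vertex $v$. Hence any two parts $x_i, x_j$ serve as the two pendant paths in a $(p,q)$-transformation with base vertex $v$, and transferring one unit of length from part $i$ to part $j$ is effected by exactly one such transformation; the outcome is again a starlike tree of the same order with $p$ pendant paths. For $p = 2$ both $S(x)$ and $S(y)$ degenerate to the path $P_n$, so they are always isomorphic and every such transformation returns a path; the statement then holds trivially with equality, and I treat $p \geq 3$ below.

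First I would invoke the discrete form of the Hardy--Littlewood--P\'olya transfer principle: when $y \prec x$ and $y \neq x$ are non-increasing integer tuples with equal sum, $y$ is reachable from $x$ by finitely many elementary transfers, each moving a single unit from a larger part to a strictly smaller part, and each chosen so that the intermediate tuple $z$ stays non-increasing, has all parts positive, and satisfies $y \prec z \prec x$ with $z \neq x$ (so the chain strictly descends toward $y$). Concretely, one lets $j$ be the least index with $x_j < y_j$ and $i$ the least index with $x_i > y_i$; a short partial-sum argument gives $i < j$ and $x_i \geq x_j + 2$, and after sliding $i$ to the bottom of its plateau and $j$ to the top of its plateau one may transfer one unit from the $i$-th to the $j$-th part while preserving the sorted order. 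Because units always travel from larger to smaller parts, the minimum part is non-decreasing, so positivity of all parts (hence the starlike structure with $p$ paths) is maintained throughout, and the number of parts stays $p$.

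Next I would match each transfer to a case of Theorem \ref{thm3}. A transfer between two parts differing by exactly one merely swaps two path lengths, leaving the isomorphism type of the starlike tree --- and hence $\bar{\varepsilon}_{k,l}$ --- unchanged (Case (2)). A transfer between two parts differing by at least two is precisely an instance of the $(p,q)$-transformation in which the longer pendant path exceeds the shorter by at least two, which strictly decreases $\bar{\varepsilon}_{k,l}$ by Case (1) of Theorem \ref{thm3}. Since the construction above forces the source and target parts to differ by at least two at every effective step, the chain $S(x) = S(z^{(0)}), S(z^{(1)}), \ldots, S(z^{(m)}) = S(y)$ has non-increasing $\bar{\varepsilon}_{k,l}$, which yields $\bar{\varepsilon}_{k,l}(S(x)) \geq \bar{\varepsilon}_{k,l}(S(y))$.

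For the equality clause, if $S(x) \cong S(y)$ then $x = y$ as sorted tuples and equality is immediate; conversely, if $S(x) \not\cong S(y)$ then $x \neq y$, and the chain constructed above contains at least one effective Case (1) transfer, forcing the strict inequality $\bar{\varepsilon}_{k,l}(S(x)) > \bar{\varepsilon}_{k,l}(S(y))$ in \eqref{eq:thm8}. The step I expect to be the \emph{main obstacle} is the discrete transfer lemma itself: one must choose, at each stage, a source/target pair at the appropriate plateau boundaries so that (i) the two parts differ by at least two, making the move an honest Case (1) transformation rather than a trivial swap, (ii) the tuple remains non-increasing with positive entries, and (iii) it descends strictly toward $y$ in the majorization order so that the induction on the number of transfers terminates exactly at $y$. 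Verifying these three bookkeeping conditions simultaneously --- essentially the covering relations of the dominance order --- is the only delicate point; once it is in place, Theorem \ref{thm3} does the rest.
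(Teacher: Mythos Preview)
Your proof is correct and follows a genuinely different route from the paper's. Both arguments reduce to Theorem~\ref{thm3}, but they traverse the majorization order in opposite directions and organize the reduction differently.

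The paper argues by induction on $p$, working \emph{upward} from $y$ to $x$: if some proper partial sum of $y$ matches that of $x$, the problem splits into two smaller starlike subproblems; otherwise all partial-sum inequalities are strict, so $y_p>x_p\ge 1$ and one may replace $(y_1,\dots,y_p)$ by $(y_1+1,\dots,y_p-1)$, invoking Case~(3) of Theorem~\ref{thm3}. You instead invoke the Hardy--Littlewood--P\'olya transfer lemma to walk \emph{downward} from $x$ to $y$ through a chain of Robin~Hood moves, each a Case~(1) application of Theorem~\ref{thm3}. Your route is the textbook Muirhead argument and has the advantage that every nontrivial step is strictly decreasing, so the equality clause falls out immediately; by contrast, the paper's upward steps use Case~(3), which in Theorem~\ref{thm3} is stated only as a weak inequality, so extracting the ``equality iff $S(x)\cong S(y)$'' part from that argument requires an extra word about when Case~(3) is strict. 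The paper's splitting trick, on the other hand, avoids having to verify the plateau-boundary bookkeeping that you (correctly) flag as the delicate point.
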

\begin{proof}
	It is easy to see that $(p,q)$-transformation can be applied for starlike trees. We will prove the theorem by induction on $p$. For $p=2$, it is clear that  the inequality \eqref{eq:thm8} holds. Assume that the inequality \eqref{eq:thm8} holds for all $p<k$. For $p=k$, we need to consider two cases.
	\begin{enumerate}
		\item If there exists $1\leq m<k$ such that $x_1+x_2+\cdots+x_m=y_1+y_2+\cdots+y_m$, we can divide $(S(y_1,y_2,\ldots, y_p))$ into two parts $S(y_1,\ldots,y_m)\cup S(y_{m+1},\ldots,y_k)$. Applying the induction hypothesis, we transform $S(y_1,\ldots,y_m)\cup S(y_{m+1},\ldots,y_k)$ into 
		$S(x_1,\ldots,x_m)\cup S(x_{m+1},\ldots,x_k)=S(x_1,x_2,\ldots, x_p)$.
		\item Otherwise, $x_1+x_2+\cdots+x_m>y_1+y_2+\cdots+y_m$ for all $1\leq m<k$. Notice that now $y_k>x_k\geq 1$. We can switch $S(y_1,y_2,\cdots,y_k)$ into $S(y_1+1,y_2,\cdots,y_{k-1},y_k-1)$. The condition $x\succ y$ is still preserved. 
	\end{enumerate} 
	We can use the above two cases recurrently until $y$ transforms into  $x$. Note that each step does not decrease the average Steiner $(k,l)$-eccentricity. 
\end{proof}

A broom $B(n,\Delta)$ is a tree of order $n$ and maximum degree $\Delta$, constructed by attaching one endpoint of $P_{n-\Delta}$ to an arbitrary leaf of $K_{1,\Delta}$. A broom is also a special starlike graph with only one pendant path of length more than $1$. Figure \ref{Fig5} illustrates a broom with maximum degree $6$. Using $\sigma$-transformation and Theorem \ref{thm1}, one can easily verify that 
\begin{equation*}
	\bar{\varepsilon}_{k,l}(P_n)\geq \bar{\varepsilon}_{k,l}(B(n,3))\geq \bar{\varepsilon}_{k,l}(B(n,4))\geq \cdots \geq \bar{\varepsilon}_{k,l}(B(n,n-1))=\bar{\varepsilon}_{k,l}(K_{1,n-1}).
\end{equation*}
It follows that $B(n, \max\{3, k-l+1\})$ is the second maximum average Steiner $(k,l)$-eccentricity among trees of order $n$. 

\begin{figure}[h!]
	\includegraphics[width=0.35\textwidth]{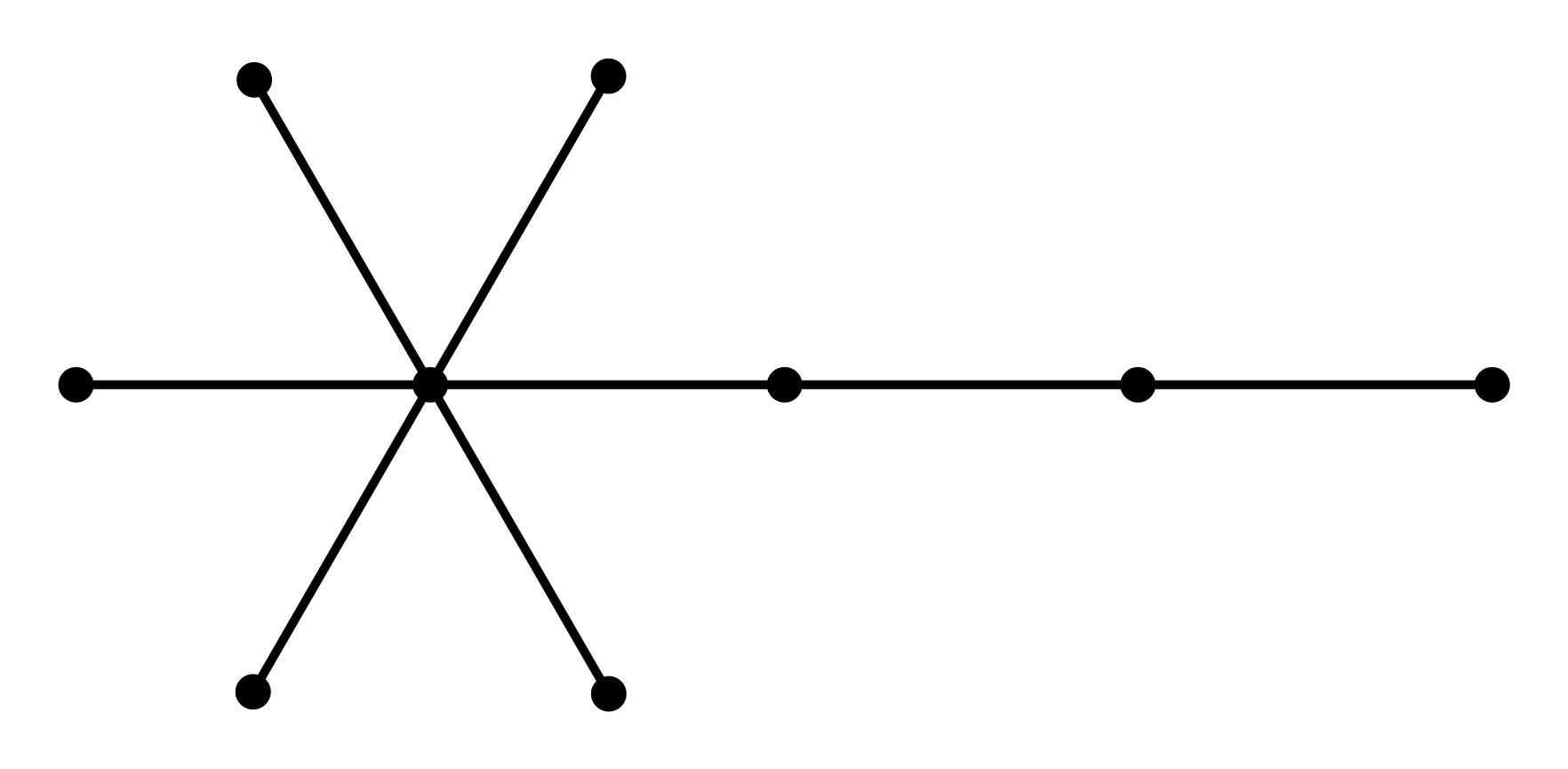}
	\caption{$B(9,6)$}\label{Fig5}
\end{figure}

\begin{corollary}\label{cor1}
	For every starlike tree $T$ of order $n$ and with $p$ leaves it holds
	\begin{equation*}
		\bar{\varepsilon}_{k,l}(T)\leq \bar{\varepsilon}_{k,l}(B(n,p)).
	\end{equation*}
\end{corollary}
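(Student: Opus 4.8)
The plan is to deduce the corollary directly from the majorization result of Theorem \ref{thm6}, by identifying the broom as the top element of the dominance order among starlike trees with a fixed number of leaves. First I would fix notation. Since $T$ is starlike with exactly $p$ leaves, its unique branching vertex has degree $p$ and $T$ carries exactly $p$ pendant paths, so $T\cong S(n_1,n_2,\ldots,n_p)$ for integers $n_1\geq n_2\geq\cdots\geq n_p\geq 1$ with $\sum_{i=1}^p n_i=n-1$. Viewing the broom as a starlike tree, $B(n,p)$ consists of one long pendant path together with $p-1$ pendant edges, so $B(n,p)\cong S(n-p,1,\ldots,1)$, where the tuple has $p-1$ trailing ones; indeed $(n-p)+(p-1)=n-1$ as required.

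The heart of the argument is the claim that the tuple $x=(n-p,1,\ldots,1)$ majorizes every admissible tuple $y=(n_1,\ldots,n_p)$, that is, $y\prec x$ in the sense of Theorem \ref{thm6}. Both tuples are already written in nonincreasing order and share the common total $\sum_{i=1}^p x_i=\sum_{i=1}^p y_i=n-1$, so only the partial-sum condition $\sum_{i=1}^m x_i\geq\sum_{i=1}^m y_i$ for $1\leq m<p$ remains to be checked. This reduces to a one-line estimate: on the one hand $\sum_{i=1}^m x_i=(n-p)+(m-1)$, while on the other hand, using $n_{m+1},\ldots,n_p\geq 1$, we obtain $\sum_{i=1}^m n_i=(n-1)-\sum_{i=m+1}^p n_i\leq (n-1)-(p-m)=(n-p)+(m-1)$, which is exactly the desired inequality.

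With $y\prec x$ established, Theorem \ref{thm6} immediately gives $\bar{\varepsilon}_{k,l}(S(x))\geq\bar{\varepsilon}_{k,l}(S(y))$, i.e. $\bar{\varepsilon}_{k,l}(B(n,p))\geq\bar{\varepsilon}_{k,l}(T)$, which is the assertion. I do not anticipate a genuine obstacle here: the only points requiring care are the correct translation of the broom into the extremal tuple $(n-p,1,\ldots,1)$ and the elementary verification of the majorization inequality, both routine once the starlike notation is in place (and noting $p\geq 3$, so that Theorem \ref{thm6}, stated for $p\geq 2$, applies). The substantive work has already been carried out through the $(p,q)$-transformation feeding into Theorem \ref{thm6}, so this corollary is in essence the specialization of that majorization statement to the maximum of the dominance order.
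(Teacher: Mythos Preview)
Your proposal is correct and follows exactly the paper's approach: identify $B(n,p)\cong S(n-p,1,\ldots,1)$, verify that $(n-p,1,\ldots,1)\succ(n_1,\ldots,n_p)$ for every admissible tuple, and invoke Theorem~\ref{thm6}. The paper states the majorization without the explicit partial-sum computation, but your verification is the intended one.
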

\begin{proof}
	Notice that $B(n,p)\cong S(n-p,1,\ldots,1)$ and for any $T\cong S(y_1,y_2,\ldots,y_p)$, it has $(n-p,1,\ldots,1)\succ (y_1,y_2,\ldots,y_p)$. By Theorem \ref{thm6}, $B(n,p)$ attains the maximum value of $\bar{\varepsilon}_{k,l}(T)$.
\end{proof}
\begin{theorem}\label{thm7}
	If $T$ is tree of order $n$ and maximum degree $\Delta$, then $$\bar{\varepsilon}_{k,l}(T)\leq \bar{\varepsilon}_{k,l}(B(n,\Delta)).$$
\end{theorem}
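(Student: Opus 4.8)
The plan is to fix a vertex $v$ with $\deg(v)=\Delta$ and to deform $T$, never decreasing $\bar{\varepsilon}_{k,l}$ and keeping the maximum degree equal to $\Delta$, into a starlike tree $T^\ast$ whose only branching vertex is $v$; Corollary~\ref{cor1} will then finish the job. The reason this suffices is that a starlike tree of maximum degree $\Delta$ has exactly $\Delta$ leaves, since its unique branching vertex has degree equal to the number of its pendant paths. Hence Corollary~\ref{cor1} with $p=\Delta$ yields $\bar{\varepsilon}_{k,l}(T^\ast)\le\bar{\varepsilon}_{k,l}(B(n,\Delta))$, and combining this with $\bar{\varepsilon}_{k,l}(T)\le\bar{\varepsilon}_{k,l}(T^\ast)$ from the non-decreasing transformations gives the claim.

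To carry out the deformation I would repeatedly apply $\sigma_\delta^{-1}$-transformations (Theorem~\ref{thm2}), each of which regrafts a branch onto a pendant path at a branching vertex other than $v$. While $T$ is not starlike it has a branching vertex $\ne v$; choosing $w\ne v$ to be a branching vertex at maximum distance from $v$, every branch of $w$ directed away from $v$ contains no further branching vertex and is therefore a pendant path, so $w$ carries at least two pendant paths. I set $v_0=w$, take $Y$ to be a deepest branch at $w$, take $P$ (ending in the leaf $v_s$, so that $X=\{v_s\}$) to be a pendant path at $w$ distinct from $Y$, and take $Z$ to be any remaining branch. Since $Y$ is a deepest branch, the requirements $\varepsilon_Y(v_0)\ge\max\{\varepsilon_X(v_s),\varepsilon_Z(v_s)\}$ and $\#V(Y)\ge\#V(X)=1$ of the $\sigma$-form hold automatically, so the $\sigma_\delta^{-1}$-transformation that grafts $Z$ onto the end $v_s$ of $P$ is legitimate. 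Its effect is that the leaf $v_s$ becomes internal while no new leaf is created, so the number of leaves drops by exactly one; $\bar{\varepsilon}_{k,l}$ does not decrease by Theorem~\ref{thm2}; and $\deg(w)$ drops by one.

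Since each step strictly decreases the number of leaves and the number of leaves of a tree of maximum degree $\Delta$ is at least $\Delta$, the procedure must terminate. When it halts, no branching vertex other than $v$ survives, so the tree is starlike with branching vertex $v$; it therefore has exactly $\Delta$ leaves, which is the situation handled by the previous paragraph.

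The step I expect to be the main obstacle is keeping the maximum degree equal to $\Delta$ throughout, and not merely at most $\Delta$. Letting it drop below $\Delta$ would be fatal, because $\bar{\varepsilon}_{k,l}(B(n,d))$ is strictly decreasing in $d$ along the chain $\bar{\varepsilon}_{k,l}(P_n)\ge\bar{\varepsilon}_{k,l}(B(n,3))\ge\cdots\ge\bar{\varepsilon}_{k,l}(K_{1,n-1})$, so a starlike tree of smaller maximum degree would only produce a larger, useless upper bound. I rule this out by never selecting the protected vertex $v$ as $v_0$, which preserves $\deg(v)=\Delta$, and by invoking the degree monotonicity of $\sigma_\delta$, which guarantees that the transformation pushes no vertex above degree $\Delta$. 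The delicate points are thus to confirm that a legal merge avoiding $v$ always exists, which the farthest-branching-vertex choice secures, and to check the $\sigma$-form eccentricity condition at each merge, both of which the argument above handles.
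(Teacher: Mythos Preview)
Your proof is correct and follows the same strategy as the paper: fix a vertex of maximum degree, reduce $T$ to a starlike tree with that centre while never decreasing $\bar\varepsilon_{k,l}$, and then apply Corollary~\ref{cor1}. The only differences are cosmetic---the paper performs the reduction via $(p,q)$-transformations at the farthest branching vertex in each subtree $ST_i$ (Theorem~\ref{thm3}(3), itself a special $\sigma^{-1}$-move) rather than your whole-branch $\sigma^{-1}$-moves, and the monotonicity you actually need (that \emph{every} valid $\sigma^{-1}$-transformation does not decrease $\bar\varepsilon_{k,l}$) is Theorem~\ref{thm1}, not Theorem~\ref{thm2}, which only asserts the existence of \emph{some} such transformation.
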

\begin{proof}
Let $\delta$ be a vertex in $T$ with maximum degree $\Delta$. One can partition $T$ into $\Delta$ maximum subtrees of $T$ with root vertex $\delta$.  We denote them by $ST_1,ST_2,\ldots, ST_\Delta$ in turn. We now prove that every $ST_i$ can be changed into a path. We can use $(p,q)$-transformation repeatedly at any branching vertex with largest distance from the root node $\delta$ until $ST_i$ becomes a path. Here we can let $p\geq q$ in the $(p,q)$-transformation.  When all $ST_i$ turn into paths, $T$ arrives at the starlike tree and the average Steiner $(k,l)$-eccentricity does not decrease.  We now use Theorem \ref{thm3} and Corollary \ref{cor1} to get $\bar{\varepsilon}_{k,l}(T)\leq \bar{\varepsilon}_{k,l}(B(n,\Delta))$.
\end{proof}

\begin{theorem}\label{thm8}
	If $T$ is tree with $n$ vertices and $p$ leaves, then $$\bar{\varepsilon}_{k,l}(T)\leq \bar{\varepsilon}_{k,l}(B(n,p)),$$
	where $k>l$.
\end{theorem}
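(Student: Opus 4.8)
The plan is to push $T$ upward to the broom through leaf‑preserving transformations that never decrease $\bar{\varepsilon}_{k,l}$, and then to read off the bound. Since $B(n,p)\cong S(n-p,1,\ldots,1)$ is starlike and Corollary~\ref{cor1} already identifies it as the maximizer of $\bar{\varepsilon}_{k,l}$ among all starlike trees on $n$ vertices with $p$ leaves, it suffices to carry $T$ to \emph{some} starlike tree $T^{*}$ with exactly $p$ leaves along steps $T=T_{0}\to T_{1}\to\cdots\to T^{*}$ with $\bar{\varepsilon}_{k,l}(T_{i})\le\bar{\varepsilon}_{k,l}(T_{i+1})$; then $\bar{\varepsilon}_{k,l}(T)\le\bar{\varepsilon}_{k,l}(T^{*})\le\bar{\varepsilon}_{k,l}(B(n,p))$ by Corollary~\ref{cor1} (through the majorization of Theorem~\ref{thm6}). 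Thus everything reduces to driving the number of branching vertices of $T$ down to one while keeping the leaf count equal to $p$ and never decreasing the average Steiner $(k,l)$-eccentricity.

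First I would build the single reduction step as a $\sigma$-transformation read in reverse. Suppose $T$ has at least two branching vertices, choose a branching vertex $v_{0}$, let $Y$ be a deepest branch at $v_{0}$ (one maximizing $\varepsilon_{Y}(v_{0})$), let $Z$ be another branch at $v_{0}$, and let $P=P(v_{0},v_{s})$ run to the nearest branching vertex $v_{s}$ in a remaining direction. Sliding $Z$ from $v_{0}$ to $v_{s}$ yields a tree $T^{+}$ with $\sigma(T^{+})=T$, and whenever this is legitimate Theorem~\ref{thm1} gives $\bar{\varepsilon}_{k,l}(T)\le\bar{\varepsilon}_{k,l}(T^{+})$. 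The decisive simplification is that for $k>l$ Lemma~\ref{lem3} requires only the depth inequality $\varepsilon_{Y}(v_{0})\ge\max\{\varepsilon_{X}(v_{s}),\varepsilon_{Z}(v_{s})\}$, the size condition $\#V(Y)\ge\#V(X)$ having been imposed solely for $k=l$ in Lemma~\ref{lem4}; and since $Y$ is a deepest branch, the half $\varepsilon_{Y}(v_{0})\ge\varepsilon_{Z}(v_{s})$ holds automatically. The move drops $\deg(v_{0})$ by one while $v_{s}$ stays branching, so after finitely many such moves $v_{0}$ becomes a degree-$2$ vertex and the number of branching vertices strictly decreases; and as $v_{s}$ keeps its own branches, $X\neq\{v_{s}\}$, these are $\sigma_{\ell}$-transformations and the $p$ leaves are preserved. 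Iterating delivers the required starlike $T^{*}$ with $p$ leaves.

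The hard part will be securing the remaining half $\varepsilon_{Y}(v_{0})\ge\varepsilon_{X}(v_{s})$ at \emph{every} stage while still forcing the branching count down, for an interior hub $v_{s}$ may itself carry a deep branch. My remedy is to orient the whole construction along a longest path of $T$: take the hub $v_{s}$ at the branching vertex sitting at the base of this path and sweep all other branches inward toward $v_{s}$, compressing each to a single pendant edge by the ecc-increasing, leaf-preserving $(p,q)$-transformation of Theorem~\ref{thm3} (lengthen the handle, shorten the branch) before or as it arrives. Processed from the far end inward, the retained branch $Y$ is always a genuine longest path, whereas the $v_{s}$-side has by then been thinned to bristles, so $\varepsilon_{X}(v_{s})$ is small and the inequality holds; the boundary cases $\varepsilon_{Y}(v_{0})=\varepsilon_{X}(v_{s})$ are precisely the equality cases flagged in Lemma~\ref{lem3} and cost nothing. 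Once $T^{*}$ is finally starlike with $p$ leaves, Corollary~\ref{cor1} supplies $\bar{\varepsilon}_{k,l}(T^{*})\le\bar{\varepsilon}_{k,l}(B(n,p))$, completing the chain.
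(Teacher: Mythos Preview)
Your overall strategy coincides with the paper's: drop the size constraint $\#V(Y)\ge\#V(X)$ (legitimate when $k>l$, by Lemma~\ref{lem3}), use leaf-preserving $\sigma_\ell^{-1}$-moves to reduce the number of branching vertices to one, and finish with Corollary~\ref{cor1}. That skeleton is correct.

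Where you go astray is in diagnosing the ``hard part.'' In your own setup you take $Y$ to be a \emph{deepest} branch at $v_0$; but then the $P$-direction is another branch of $v_0$, so $\varepsilon_Y(v_0)\ge \varepsilon_{P\text{-direction}}(v_0)=s+\varepsilon_X(v_s)\ge \varepsilon_X(v_s)$. Thus the inequality $\varepsilon_Y(v_0)\ge\varepsilon_X(v_s)$ that you labor over is automatic---no $(p,q)$-preprocessing is needed. (Your proposed ``compression'' is also shaky as written: a $(p,q)$-transformation requires \emph{two pendant paths} at a common vertex, so you cannot ``lengthen the handle'' toward $v_s$ when that handle ends at a branching vertex.)

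The genuine obstruction, which you do not name, is a step earlier: having fixed $v_0$ and taken $Y$ to be its deepest branch, there may be \emph{no} remaining direction containing another branching vertex---this happens precisely when the unique branch of $v_0$ that meets the rest of the branching vertices is itself the deepest one. The paper's remedy is simply to choose the pair first: pick any two nearest branching vertices $a,b$, look at all their branches other than the $a$--$b$ path, let $v_0\in\{a,b\}$ be the one carrying the deepest such branch, and set $Y$ equal to that branch, $Z$ to the remaining branches at $v_0$, and $X$ to the branches at $v_s$. With this labeling both halves of the depth condition hold at once, $\deg(v_0)$ drops to $2$ in a single $\sigma_\ell^{-1}$-move, and iteration yields a starlike tree. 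Replacing your last paragraph by this one-line choice gives a clean proof identical to the paper's.
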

\begin{proof}
	For nontrivial cases, we assume that $p\geq 3$, so $T$ now has at least one branching vertex. The inequality $k>l$ implies the restriction $\#V(X)\leq\#V(Y)$ of $\sigma$-transformation ($\sigma_\ell$-transformation) and its inverse can be canceled, which is evidenced by the proof of Lemma \ref{lem3}.  We thus have the following discussion.
	\begin{enumerate}
		\item $T$ has at least $2$ branching vertices. We choose two nearest branching vertices as $v_0$ and $v_s$ in the $\sigma^{-1}$-form of $T$. By applying $\sigma^{-1}_\ell$-transformation once, the degree of $v_0$ can become exactly $2$. Since the branching vertices of tree $T$ are finite, we can use $\sigma^{-1}_\ell$-transformation again and again to obtain a starlike tree.
		\item Now suppose that $T$ has exactly one branching vertex, or equivalently, it is a starlike tree. We  apply $(p,q)$-transformation repeatedly to achieve the broom $B(n,p)$, as depicted in the proof of Corollary \ref{cor1}.
	\end{enumerate}
	All transformations used to reach $B(n,p)$ keep the number of leaves and do not decrease the average Steiner $(k,l)$-eccentricity, so we derive that $\bar{\varepsilon}_{k,l}(T)\leq \bar{\varepsilon}_{k,l}(B(n,p))$.
\end{proof}

A caterpillar is a tree with the property that only leaves remain if we delete a diametrical path. We call the diametrical path of a caterpillar the spine.  Let $P_{d+1}=u_0u_1\cdots u_d$ be a diametrical path of $T$. Notation $CP_n(p_1,p_2,\ldots, p_{d-1})$, where $p_i\geq0$, represents a caterpillar of order $n$ where each $u_i$ in $P_{d+1}$ owns $p_i$ extra pendant edges for every $i\in\{1,2,\ldots,d-1\}$. 

Notice that a broom is a special caterpillar $CP_n(\Delta-2,0,\ldots,0)$. For $d=2$, the caterpillar is uniquely determined by $K_{1,n-1}$. For $d\geq 3$, we now introduce two kinds of caterpillars: 
\begin{enumerate}
	\item Central type. All leaves are joined with central vertices. In detail, for $d$ even, we attach $n-d-1$ vertices to $u_{\frac{d}{2}}$ and denote such a caterpillar by $T_{n,d}^C$ (or $T_{n,d}^C(n-1-d)$); for $d$ odd, we attach $s$ vertices to $u_{\lfloor\frac{d}{2}\rfloor}$ and $n-d-1-s$ vertices to $u_{\lceil\frac{d}{2}\rceil}$, and we denote it by $T_{n,d}^C(s)$.
	\item Double comet type. $s$ new vertices are joined with $u_1$ and the remaining $n-d-1-s$ vertices are attached to $u_{d-1}$, and we denote it by $D(n, s,n-d-1-s)$. 
\end{enumerate} 

\begin{figure}[h!]
	\includegraphics[width=1\textwidth]{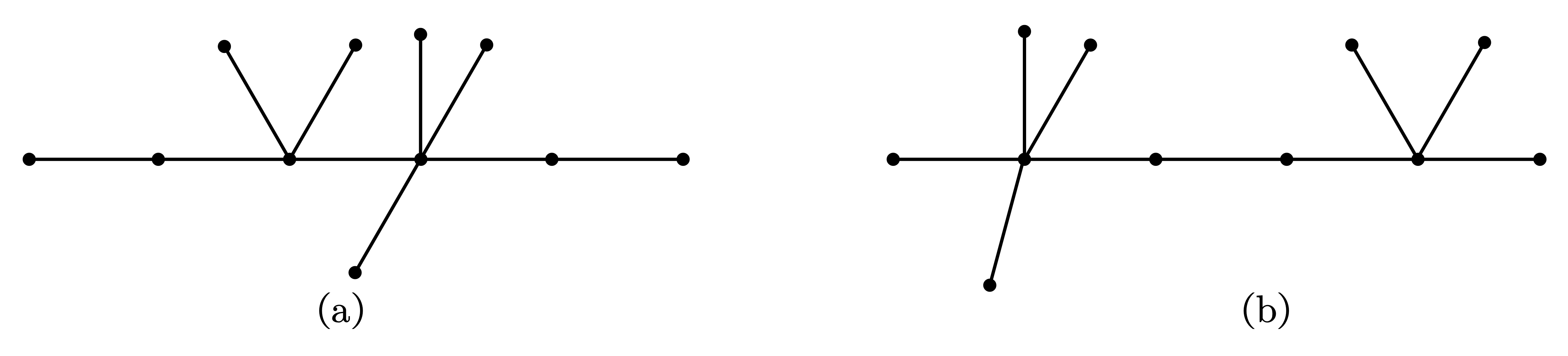}
	\caption{(a) $T_{11,5}^C(2)$; (b) $D(11,3,2)$}\label{Fig6}
\end{figure}

See Figure \ref{Fig6} for examples. In fact, the parameter $s$ can be ignored due to the following lemma. 
\begin{lemma}
Assume that the diameter $d$ and the order $n$ of the caterpillar are fixed. For $d$ odd, $\bar{\varepsilon}_{k,l}(T_{n,d}^C(s))$ keeps a  constant for every $s\in\{0,1,\ldots,n-d-1\}$.  
$\bar{\varepsilon}_{k,l}(D(n, s,n-d-1-s))$  keeps a  constant for every $s\in\{0,1,\ldots,n-d-1\}$.
\end{lemma}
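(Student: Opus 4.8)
The plan is to prove the stronger statement that transporting a single pendant leaf between the two distinguished attachment vertices leaves $\bar\varepsilon_{k,l}$ unchanged, and then to chain such moves across the whole range of $s$. For the central type the two attachment vertices are the adjacent central spine vertices $c_1=u_{\lfloor d/2\rfloor}$ and $c_2=u_{\lceil d/2\rceil}$; for the double comet type they are $u_1$ and $u_{d-1}$. In both families these vertices are interchanged by the reflection $u_i\mapsto u_{d-i}$ of the diametrical path, so that $T_{n,d}^C(s)\cong T_{n,d}^C(n-d-1-s)$ and $D(n,s,n-d-1-s)\cong D(n,n-d-1-s,s)$. Hence $\bar\varepsilon_{k,l}$ is already symmetric under $s\leftrightarrow n-d-1-s$, and it suffices to prove it constant on the upper half $s\ge (n-d-1)/2$.

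Next I would realize the move $s\mapsto s+1$ as a single $\sigma$-transformation. Take $v_0$ to be the attachment vertex that gains a leaf and $v_s$ the one that loses it, let $P$ be the spine path joining them (of length $1$ in the central case and length $d-2$ in the double-comet case), let $Z$ be the transported leaf, and let $Y$, $X$ be the branches hanging at $v_0$, $v_s$. The crucial point is that the equality hypothesis of Lemma \ref{lem3} is satisfied: because the two attachment vertices sit symmetrically on the diametrical path, their outward reaches coincide, $\varepsilon_Y(v_0)=\varepsilon_X(v_s)$, the common value being $\lfloor d/2\rfloor$ in the central case (the distance to $u_0$, resp.\ $u_d$) and $1$ in the double-comet case. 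One also checks $\varepsilon_Y(v_0)\ge\max\{\varepsilon_X(v_s),\varepsilon_Z(v_s)\}$, so the configuration is a genuine $\sigma$-form with $\sigma(T_{n,d}^C(s))=T_{n,d}^C(s+1)$ (resp.\ $\sigma(D(n,s,\cdot))=D(n,s+1,\cdot)$). Lemma \ref{lem3} then yields $\bar\varepsilon_{k,l}(s)=\bar\varepsilon_{k,l}(s+1)$ for $k>l$.

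The bookkeeping obstacle is the side-condition $\#V(Y)\ge\#V(X)$ built into the $\sigma$-form. A direct count gives $\#V(Y)-\#V(X)=2s-(n-d-2)$ in both families, so the move $s\mapsto s+1$ is an admissible $\sigma$-transformation precisely when $s\ge (n-d-2)/2$, and this range contains the entire upper half. Chaining the moves $s\mapsto s+1$ from the middle up to $s=n-d-1$ therefore shows $\bar\varepsilon_{k,l}$ is constant on the upper half, and the reflection isomorphism of the first paragraph propagates the constant to the lower half. This completes the argument for $k>l$.

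I expect the main obstacle to be the case $k=l$. There Lemma \ref{lem3} is unavailable and one must instead invoke Lemma \ref{lem4}, whose equality is governed not by the reach identity $\varepsilon_Y(v_0)=\varepsilon_X(v_s)$ but by a cancellation of case-counts tied to $\#V(Y)$ versus $\#V(X)$ together with the symmetry of $P$. Because $\#V(Y)\neq\#V(X)$ for unbalanced $s$, this cancellation is not automatic; concretely, writing $\bar\varepsilon_{k,k}=\binom{n}{k}^{-1}SW_k$ and expanding $SW_k=\sum_e\bigl[\binom{n}{k}-\binom{a_e}{k}-\binom{n-a_e}{k}\bigr]$ over edge split-sizes $a_e$, the lone spine edge separating the two attachment vertices carries a split-size $a_e$ that moves with $s$. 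Thus the $k=l$ instance does not reduce to the clean $\sigma$-equality and must be treated on its own terms; this is the step where I would concentrate the careful work, verifying separately whether the stated constancy survives there or whether it should be restricted to $k>l$.
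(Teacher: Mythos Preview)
Your argument for $k>l$ is essentially the paper's own: realize the passage between consecutive values of $s$ as a $\sigma$-transformation with $\varepsilon_Y(v_0)=\varepsilon_X(v_s)$ (equal to $\lfloor d/2\rfloor$ in the central case, to $1$ in the double-comet case) and invoke the equality clause of Lemma~\ref{lem3}. The paper is terser and does not spell out how to meet the side-condition $\#V(Y)\ge\#V(X)$; your use of the reflection $u_i\mapsto u_{d-i}$ to reduce to the upper half of the $s$-range is a clean way to handle it.

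Your caution about $k=l$ is well placed, and in fact decisive: the constancy \emph{fails} there. The paper's proof appeals to ``the equality condition,'' but that clause lives only in Lemma~\ref{lem3}; Lemma~\ref{lem4} provides the inequality for $k=l$ with no matching equality criterion, and the cancellation in its proof hinges on $\#V(Y)$ versus $\#V(X)$ rather than on the eccentricity identity. Your edge-split heuristic is exactly right. Writing $SW_k(T)=\sum_e\bigl[\binom{n}{k}-\binom{a_e}{k}-\binom{n-a_e}{k}\bigr]$, the only edge whose split moves with $s$ in $T_{n,d}^C(s)$ is the central spine edge, and its contribution is not constant: for instance $SW_2\bigl(T_{8,5}^C(0)\bigr)=67$ while $SW_2\bigl(T_{8,5}^C(1)\bigr)=68$. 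Likewise $SW_2\bigl(D(6,0,2)\bigr)=28$ but $SW_2\bigl(D(6,1,1)\bigr)=29$. So the lemma, read literally for all $l\le k$, is false at $k=l$; the paper's proof covers only $k>l$, and your instinct to restrict the statement is the correct resolution rather than a gap to be filled.
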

\begin{proof}
$\sigma$-transformation can be applied for any pairs of different $T_{n,d}^C(s)$. Note that the equality condition holds for such a $\sigma$-transformation. 

Due to $\varepsilon_X(v_s)=\varepsilon_Y(v_0)=1$ in $\sigma^{-1}$-form of $T'$, the $\sigma^{-1}$-transformation  for any distinct  $D(n, s,n-d-1-s)$ satisfies the equality condition.
\end{proof}
Therefore, we simply denote the central type caterpillar and the double comet type caterpillar by $T_{n,d}^C$ and $T_{n,d}^D$, respectively. We also have $T_{n,2}^C=T_{n,2}^D=K_{1,n-1}$.

\begin{theorem}\label{thm9}
	If $T$ is tree of order $n$ and $\operatorname{diam}(T)=d$, then $$\bar{\varepsilon}_{k,l}(T)\geq \bar{\varepsilon}_{k,l}(T_{n,d}^C).$$
\end{theorem}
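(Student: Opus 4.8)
The plan is to fix a diametrical path (spine) $P=u_0u_1\cdots u_d$ of $T$ and to transform $T$ into $T_{n,d}^C$ by a sequence of $\sigma$- and $(p,q)$-transformations, each of which keeps the diameter equal to $d$ and, by Theorems \ref{thm1} and \ref{thm3}, does not increase $\bar\varepsilon_{k,l}$. Since $T_{n,d}^C$ is the terminal object of this process, it follows that $\bar\varepsilon_{k,l}(T)\ge\bar\varepsilon_{k,l}(T_{n,d}^C)$.

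First I would reduce $T$ to a caterpillar with spine $P$. As long as $T$ is not a caterpillar there is a vertex off the spine at distance at least $2$ from $P$; choosing a deepest such vertex, namely a leaf $w$ whose neighbour lies off $P$, and applying the $\sigma$-transformation that pulls $w$ one step toward $P$ (take $Z=\{w\}$, let $v_s$ be the neighbour of $w$, and let $v_0$ be the next vertex toward $P$), one checks the hypotheses of the $\sigma$-form directly: the branch $Y$ pointing toward the spine contains essentially the whole tree, so $\varepsilon_Y(v_0)\ge\max\{\varepsilon_X(v_s),\varepsilon_Z(v_s)\}$ and $\#V(Y)\ge\#V(X)$ both hold. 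By Theorem \ref{thm1} the value does not increase, and since the spine is untouched the diameter remains $d$. Iterating yields a caterpillar $CP_n(p_1,\dots,p_{d-1})$.

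Next I would push all pendant edges toward the centre, noting that a pendant attached at $u_i$ keeps the diameter equal to $d$ for every $1\le i\le d-1$. Moving a pendant from $u_i$ one step toward $u_{\lfloor d/2\rfloor}$ is a $\sigma$-transformation whose distance requirement reduces to $i\le (d-1)/2$, so it is admissible for any off-centre pendant; for odd $d$ the final step onto the far central vertex is exactly the equality case $\varepsilon_X(v_s)=\varepsilon_Y(v_0)$, in agreement with the preceding lemma that $\bar\varepsilon_{k,l}(T_{n,d}^C(s))$ does not depend on $s$. When $k>l$ these moves already settle the theorem, since (as in the proof of Theorem \ref{thm8}) the restriction $\#V(X)\le\#V(Y)$ may be dropped; one simply sweeps every pendant to the centre and reaches $T_{n,d}^C$.

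The hard part is the case $k=l$, where Lemma \ref{lem4} genuinely requires $\#V(Y)\ge\#V(X)$: a naive inward move of a single pendant can violate this, and can even transiently increase the Steiner Wiener index when the pendants are concentrated off-centre, for instance when they all sit at $u_1$. To cope with this I would order the pendant moves so that the centre-ward branch $Y$ is always the heavier one, moving the pendants of the lighter side inward first, and whenever every admissible inward pendant move is blocked I would instead apply a $(p,q)$-transformation balancing the two longest arms of the then locally starlike tree; such a step is always legitimate for $k=l$ by Lemma \ref{lem6}, preserves $d$ because the sum of the two longest arm lengths is unchanged, and does not increase $\bar\varepsilon_{k,l}$. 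The main obstacle is therefore the combinatorial bookkeeping that this mixed $\sigma$/$(p,q)$ procedure never stalls before producing $T_{n,d}^C$; equivalently, for $k=l$ one must show that $T_{n,d}^C$ is the unique minimiser of the Steiner Wiener index $SW_k$ among trees of order $n$ and diameter $d$, which is the content that the diameter-preserving, $\bar\varepsilon_{k,l}$-nonincreasing moves have to certify.
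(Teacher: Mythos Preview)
Your overall two--step plan---reduce to a caterpillar on the chosen spine, then push all pendants to the centre---is precisely the paper's strategy. The difference is in which $\sigma$--variants are invoked, and this matters because the cardinality hypothesis $\#V(Y)\ge\#V(X)$ is indispensable when $k=l$ (Lemma~\ref{lem4}).

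There is a genuine gap in your Step~1. With your choice $Z=\{w\}$, $v_s$ the neighbour of $w$, and $v_0$ the next vertex toward the spine, the piece $X$ consists of $v_s$ together with all the \emph{other} subtrees hanging at $v_s$; the assertion that ``$Y$ contains essentially the whole tree'' does not force $\#V(Y)\ge\#V(X)$. Concretely, take $d=4$, spine $u_0\cdots u_4$, attach a single vertex $v_s$ to $u_2$, and hang six leaves on $v_s$ (so $n=12$, $\operatorname{diam}=4$). Picking any one of these leaves as $w$ gives $\#V(X)=6>5=\#V(Y)$, so your $\sigma$--move is illegal for $k=l$. The paper avoids this by using the $\sigma_\delta$--variant with $X=\{v_s\}$ (hence $\#V(X)=1$ automatically) and taking $Z$ to be the whole subtree beyond $v_s$; the depth bound $\varepsilon_2(u_i;T_i)\le\min\{i,d-i\}$ then secures the eccentricity inequality.

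For Step~2 your diagnosis is right but the cure is far simpler than the mixed $\sigma$/$(p,q)$ scheme you sketch. If one sweeps the caterpillar from the outside in---first move every pendant at $u_1$ to $u_2$, then every pendant now at $u_2$ to $u_3$, and so on toward the centre, and symmetrically from $u_{d-1}$ inward---then at the step moving the pendants from $u_i$ to $u_{i+1}$ the branch $X$ contains only the bare spine segment $\{u_0,\dots,u_i\}$ (all earlier pendants having already been swept past), whence $\#V(X)=i+1\le d-i\le\#V(Y)$ for every $i\le\lfloor(d-1)/2\rfloor$. No $(p,q)$--moves are needed, and the spine is untouched so the diameter stays $d$; this outside--in order is exactly what the paper's repeated use of $\sigma_d$ amounts to.
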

\begin{proof}
	Let $T$ be a tree with diameter $d$. We can select a diametrical path $P_{d+1}=u_0u_1\cdots u_d$ of $T$. Let $T_i$ be the maximum subtree taking $u_i$ as the root and not containing  $P_{d+1}$ for each $i\in\{1,2,\ldots,d-1\}$. Condition $\varepsilon_2(u_i;T_i)\leq \min\{i,d-i\}$ ensures that we can use $\sigma_\delta$-transformation on $T_i$, as long as $T_i$ does not become a star with central vertex $u_i$. Applying this process for all $T_i$, $T$ turns into a caterpillar. We now use $\sigma_d$-transformation again and again to move all vertices in $T\backslash P_{d+1}$ to the central vertices of $P_{d+1}$. Hence a central type caterpillar $T_{n,d}^C$ can be obtained. Due to the non-increasing properties of the above transformations, we finally see that $\bar{\varepsilon}_{k,l}(T)\geq \bar{\varepsilon}_{k,l}(T_{n,d}^C)$.
\end{proof}
\begin{corollary}
	If $T$ is a tree of order $n$ and $\operatorname{diam}(T)=d$, then $T_{n,d}^C(n-d-1)$ have both maximum degree and minimum average Steiner $(k,l)$-eccentricity.
\end{corollary}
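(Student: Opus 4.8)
The corollary bundles two claims about the single caterpillar $T_{n,d}^C(n-d-1)$: that it minimizes $\bar\varepsilon_{k,l}$ among trees of order $n$ with diameter $d$, and that it simultaneously attains the largest maximum degree in this class. The minimality claim is essentially already available, so the plan is first to identify $T_{n,d}^C(n-d-1)$ with the extremal object of Theorem \ref{thm9}. For $d$ even, $T_{n,d}^C(n-d-1)=T_{n,d}^C(n-1-d)=T_{n,d}^C$ by definition; for $d$ odd, $T_{n,d}^C(n-d-1)$ is the member of the family with all $n-d-1$ surplus vertices attached to $u_{\lfloor d/2\rfloor}$, which by the preceding lemma (that $\bar\varepsilon_{k,l}(T_{n,d}^C(s))$ is independent of $s$) shares the common value $\bar\varepsilon_{k,l}(T_{n,d}^C)$. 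Thus Theorem \ref{thm9} already yields $\bar\varepsilon_{k,l}(T)\ge\bar\varepsilon_{k,l}(T_{n,d}^C(n-d-1))$ for every tree $T$ of order $n$ with $\operatorname{diam}(T)=d$, settling the minimality half with no extra work.

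The real content is the degree half. The maximum degree of $T_{n,d}^C(n-d-1)$ equals $2+(n-d-1)=n-d+1$, the $2$ coming from the two spine neighbors of the loaded central vertex and the $n-d-1$ from its pendant leaves, so it suffices to prove the upper bound $\Delta(T)\le n-d+1$ for every tree $T$ of order $n$ with diameter $d$. I would prove this by counting vertices against a fixed diametrical path. Choose a vertex $w$ with $\deg(w)=\Delta(T)$ and a diametrical path $P=v_0v_1\cdots v_d$, and split into two cases according to whether $w$ lies on $P$. If $w=v_j\in P$, then the two $P$-neighbors of $w$ root branches of $T-w$ containing at least $j$ and at least $d-j$ vertices respectively, while each of the remaining $\Delta-2$ neighbors roots a branch with at least one vertex; since distinct neighbors of $w$ lie in distinct components of $T-w$ these counts are disjoint, so $n\ge 1+j+(d-j)+(\Delta-2)=\Delta+d-1$, i.e.\ $\Delta\le n-d+1$. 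If $w\notin P$, let $u$ be the vertex of $P$ nearest $w$ and $t=d_T(w,u)\ge 1$; the single branch of $w$ through $u$ already contains the internal $w$--$u$ path together with all of $P$, hence at least $t+d$ vertices, and the other $\Delta-1$ branches contribute at least one each, giving $n\ge 1+(t+d)+(\Delta-1)=\Delta+d+t\ge\Delta+d+1$, so $\Delta\le n-d-1$. In either case $\Delta(T)\le n-d+1$, and equality is forced only in the on-path case with $j\in\{\lfloor d/2\rfloor,\lceil d/2\rceil\}$ and all surplus branches being single leaves, which is precisely $T_{n,d}^C(n-d-1)$.

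Combining the two halves proves the corollary. The only point I expect to require care is the bookkeeping in the off-path case: I must verify that $u$ cannot be an endpoint of $P$ when $w\notin P$ (otherwise $d_T(w,v_d)=t+d>d$ would contradict $\operatorname{diam}(T)=d$), and that $t\ge 1$ genuinely produces the strict loss $\Delta\le n-d-1$, so this case never competes for the extremal degree. The argument is otherwise elementary once the vertex count is organized around one fixed diametrical path, and the equality discussion simultaneously certifies that $T_{n,d}^C(n-d-1)$ is the tree realizing the maximum degree $n-d+1$ together with the minimum average Steiner $(k,l)$-eccentricity.
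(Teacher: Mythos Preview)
The paper states this corollary without proof, treating it as an immediate consequence of Theorem~\ref{thm9} together with the elementary (and well-known) bound $\Delta(T)\le n-d+1$ for trees of diameter $d$. Your proposal supplies exactly that argument in full detail: the minimality of $\bar\varepsilon_{k,l}$ comes directly from Theorem~\ref{thm9} and the lemma that $\bar\varepsilon_{k,l}(T_{n,d}^C(s))$ is independent of $s$, and your branch-counting for the degree bound is correct in both the on-path and off-path cases. So your approach matches what the paper implicitly intends.

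One small over-claim to fix: you assert that equality $\Delta=n-d+1$ forces $j\in\{\lfloor d/2\rfloor,\lceil d/2\rceil\}$. This is not true. If you attach all $n-d-1$ surplus leaves to any $v_j$ with $1\le j\le d-1$, the resulting caterpillar still has diameter exactly $d$ (since $j+1\le d$ and $d-j+1\le d$) and maximum degree $n-d+1$. So the degree-maximizing tree is not unique. Fortunately the corollary does not assert uniqueness, only that $T_{n,d}^C(n-d-1)$ is one tree achieving both extremes, so this slip does not affect the validity of your proof---just delete the clause about $j$ being forced to the center. Your parenthetical remark that $u$ cannot be an endpoint of $P$ in the off-path case is correct but unnecessary for the vertex count; the branch through $u$ contains at least $t+d$ vertices regardless of where $u$ sits on $P$.
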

From the relationship between $\operatorname{diam}(T)$ and $\operatorname{rad}(T)$, we straightly have 
\begin{corollary}
If $T$ is a tree with $n$ vertices and $\operatorname{rad}(T)=r$, then
$$\bar{\varepsilon}_{k,l}(T)\geq \bar{\varepsilon}_{k,l}(T_{n,2r-1}^C).$$
\end{corollary}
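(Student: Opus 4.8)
The final statement to prove is the corollary relating the radius-constrained bound to the diameter-constrained bound:

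\begin{proof}[Proof proposal]
The plan is to reduce this radius-constrained statement entirely to the already-established diameter result, Theorem \ref{thm9}, by exploiting the tight arithmetic relationship between the radius and the diameter of a tree. First I would recall the classical fact that for any connected graph, and in particular for any tree $T$, the radius and diameter satisfy $\operatorname{rad}(T)\leq\operatorname{diam}(T)\leq 2\operatorname{rad}(T)$. For trees one has the sharper statement that $\operatorname{diam}(T)$ is either $2\operatorname{rad}(T)-1$ or $2\operatorname{rad}(T)$, since the eccentricity function on a tree is \emph{unimodal} along any path and the center of a tree consists of either one vertex or two adjacent vertices. Thus if $\operatorname{rad}(T)=r$, then $\operatorname{diam}(T)\in\{2r-1,2r\}$. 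The idea is then to feed whichever of these two diameter values actually occurs into Theorem \ref{thm9} and compare against the claimed comparison tree $T_{n,2r-1}^C$.

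Next I would treat the two cases separately. If $\operatorname{diam}(T)=2r-1$, then Theorem \ref{thm9} directly gives $\bar{\varepsilon}_{k,l}(T)\geq\bar{\varepsilon}_{k,l}(T_{n,2r-1}^C)$, which is exactly the desired inequality. If instead $\operatorname{diam}(T)=2r$, Theorem \ref{thm9} gives the weaker-looking bound $\bar{\varepsilon}_{k,l}(T)\geq\bar{\varepsilon}_{k,l}(T_{n,2r}^C)$, and so the remaining task is to verify the monotonicity relation $\bar{\varepsilon}_{k,l}(T_{n,2r}^C)\geq\bar{\varepsilon}_{k,l}(T_{n,2r-1}^C)$, i.e. that the central-type caterpillar with the larger diameter has no smaller average Steiner $(k,l)$-eccentricity. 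This comparison between two central-type caterpillars of the same order but consecutive diameters is precisely the kind of statement the paper's transformation machinery is built to handle: one should be able to pass from $T_{n,2r}^C$ to $T_{n,2r-1}^C$ by a single diametrical $\sigma_d^{-1}$-transformation in reverse, or equivalently by a $\sigma_d$-transformation from the longer caterpillar that shortens the spine by one while redistributing a pendant vertex, invoking Theorem \ref{thm1} (or Lemma \ref{lem7}) to guarantee the inequality goes the right way.

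The step I expect to be the main obstacle is establishing this last monotonicity $\bar{\varepsilon}_{k,l}(T_{n,2r}^C)\geq\bar{\varepsilon}_{k,l}(T_{n,2r-1}^C)$ cleanly, because it requires exhibiting an explicit transformation converting one named extremal caterpillar into the other and checking that it fits the hypotheses of the $\sigma_d$-transformation (in particular the $\#V(Y)\geq\#V(X)$ and $\varepsilon_Y(v_0)\geq\max\{\varepsilon_X(v_s),\varepsilon_Z(v_s)\}$ conditions governing the $\sigma$-form). A subtlety is that shortening the diameter by one changes the location of the ``central'' vertices where the pendant leaves accumulate, so the bookkeeping of which vertex plays the role of $v_0$ and which plays $v_s$ must be done carefully for the parity shift between $2r$ (even diameter, single central vertex) and $2r-1$ (odd diameter, two central vertices). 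Once that single transformation is justified, the corollary follows immediately by chaining it with Theorem \ref{thm9} in the $\operatorname{diam}(T)=2r$ case.
\end{proof}
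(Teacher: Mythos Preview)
Your approach is correct and matches the paper's, which offers no proof beyond the remark that the corollary follows ``from the relationship between $\operatorname{diam}(T)$ and $\operatorname{rad}(T)$''; you have simply unpacked this as $\operatorname{diam}(T)\in\{2r-1,2r\}$ for trees followed by an appeal to Theorem~\ref{thm9}. The monotonicity step $\bar{\varepsilon}_{k,l}(T_{n,2r}^C)\geq\bar{\varepsilon}_{k,l}(T_{n,2r-1}^C)$ you flag as the main obstacle is in fact a single routine $\sigma$-transformation and not a genuine difficulty: in $T_{n,2r}^C$ take $v_0=u_r$, $v_s=u_{2r-1}$, $X=\{v_s\}$, $Z$ the pendant edge $u_{2r-1}u_{2r}$, and $Y$ the remaining subtree at $u_r$; then $\varepsilon_Y(v_0)=r\geq 1=\max\{\varepsilon_X(v_s),\varepsilon_Z(v_s)\}$ and $\#V(Y)=n-r\geq 1=\#V(X)$, so Theorem~\ref{thm1} applies and moving $Z$ to $u_r$ produces exactly $T_{n,2r-1}^C(0)$, which by the lemma preceding Theorem~\ref{thm9} has the same average Steiner $(k,l)$-eccentricity as any $T_{n,2r-1}^C(s)$.
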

Let $x=(x_1,x_2,\ldots, x_{d-1})$ and $y=(y_1,y_2,\ldots, y_{d-1})$ be two $(d-1)$-tuples of non-negative integers, where $d\geq 2$. We write $x\vartriangleright y$ if $x$ and $y$ satisfy the following conditions:
\begin{enumerate}
	%\item $x_1\geq x_2\geq\cdots\geq x_{\lfloor\frac{d}{2}\rfloor}$, $x_{d-1}\geq x_{d-2}\geq \cdots\geq x_{\lceil\frac{d}{2}\rceil}$,  $y_1\geq y_2\geq\cdots\geq y_{\lfloor\frac{d}{2}\rfloor}$ and $y_{d-1}\geq y_{d-2}\geq \cdots\geq y_{\lceil\frac{d}{2}\rceil}$;
	\item $\sum_{i=1}^kx_i\geq \sum_{i=1}^ky_i$ and  $\sum_{i=1}^kx_{d-i}\geq \sum_{i=1}^ky_{d-i}$, for every $1\leq k<\lfloor\frac{d}{2}\rfloor$;
	\item $\sum_{i=1}^{d-1}x_i=\sum_{i=1}^{d-1}y_i$.
\end{enumerate} 
\begin{theorem}
Let $x$ and $y$ be two $(d-1)$-tuples of non-negative integers such that $x\vartriangleright y$ and $n-d-1=\sum_{i=1}^{d-1}x_i=\sum_{i=1}^{d-1}y_i$. Then
\begin{equation*}
	\bar{\varepsilon}_{k,l}(CP_n(x))\geq \bar{\varepsilon}_{k,l}(CP_n(y)).
\end{equation*}
\end{theorem}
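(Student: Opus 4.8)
The plan is to realise the order $\vartriangleright$ by a sequence of elementary relocations of pendant edges along the spine $u_0u_1\cdots u_d$, each of which pushes one pendant vertex one step \emph{towards its nearer endpoint}, and to show that no such relocation decreases $\bar\varepsilon_{k,l}$. Concretely, moving a pendant vertex from $u_i$ to $u_{i-1}$ with $2\le i\le\lfloor d/2\rfloor$ is a $\sigma^{-1}$-transformation: take the pendant vertex as $Z$, the edge $u_{i-1}u_i$ as the path $P$ with $v_0=u_i$ and $v_s=u_{i-1}$, the spine branch towards $u_d$ as $Y$ and the branch towards $u_0$ as $X$, so that $\varepsilon_Y(v_0)=d-i\ge i-1=\varepsilon_X(v_s)$ (and $\varepsilon_Z(v_s)\le 1$). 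By Theorem~\ref{thm1} the reverse (centreward) move is non-increasing, hence the endward move is non-decreasing, in agreement with $T_{n,d}^C$ being the minimiser of Theorem~\ref{thm9}; the symmetric statement holds on the right half. I would also record, via the equality clause of Lemma~\ref{lem3}, that when $\varepsilon_X(v_s)=\varepsilon_Y(v_0)$ the two values coincide, which is what makes relocations across the one or two central vertices cost-free.

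Next I would convert $x\vartriangleright y$ into a transportation problem. Writing $X_k=\sum_{i=1}^k x_i$, $Y_k=\sum_{i=1}^k y_i$ for the left partial sums and $X_k',Y_k'$ for the right ones, the hypothesis reads $X_k\ge Y_k$ and $X_k'\ge Y_k'$ for $1\le k<\lfloor d/2\rfloor$, with $\sum x_i=\sum y_i$. The spine splits into a left region $\{u_1,\dots,u_{\lfloor d/2\rfloor-1}\}$, a right region $\{u_{\lceil d/2\rceil+1},\dots,u_{d-1}\}$, and a central region of one vertex (for $d$ even) or two vertices (for $d$ odd), on which $\bar\varepsilon_{k,l}$ is invariant by the equality clause above. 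Evaluating the two controlling inequalities at $k=\lfloor d/2\rfloor-1$ shows that $x$ carries at least as much mass as $y$ in each outer region; since the totals agree, $y$ carries \emph{at least} as much as $x$ in the central reservoir. Thus every mass deficit $y$ shows on the left, and on the right, can be supplied out of $y$'s central surplus, and never by dragging mass across the centre.

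I would then finish by induction on $N(x,y)=\sum_{k}(X_k-Y_k)+\sum_{k}(X_k'-Y_k')\ge 0$. If $N=0$, then $x$ and $y$ agree outside the central reservoir and have equal reservoir totals, so $\bar\varepsilon_{k,l}(CP_n(x))=\bar\varepsilon_{k,l}(CP_n(y))$. If $N>0$, I pick a strict inequality, say $X_m>Y_m$ on the left; then $\sum_{i>m}y_i>\sum_{i>m}x_i$, and by the conservation estimate of the previous paragraph a unit of this excess can be drawn from the left region or the reservoir (never the opposite half). Transporting it leftwards to a suitable position $\le m$ is a composition of endward elementary moves, hence non-decreasing, strictly lowers $N$, and preserves $x\vartriangleright y'$; iterating carries $y$ up to $x$ through non-decreasing steps and yields $\bar\varepsilon_{k,l}(CP_n(x))\ge\bar\varepsilon_{k,l}(CP_n(y))$. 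These are precisely the starlike transfers of Theorems~\ref{thm3} and \ref{thm6} transplanted to the two ends of the spine.

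The step I expect to be the real obstacle is the Steiner--Wiener range $k=l$. There the monotonicity of an endward relocation is \emph{not} governed by the eccentricity comparison $\varepsilon_Y(v_0)\ge\varepsilon_X(v_s)$ but, through Lemma~\ref{lem4}, by the vertex-count condition $\#V(Y)\ge\#V(X)$; and once pendant mass has accumulated near an endpoint this condition can fail for exactly the moves the transportation scheme wants to perform (loading still more mass onto an already heavy $u_1$). Hence the clean part of the plan is the range $k>l$, where the size restriction is dispensable (cf.\ the remark in the proof of Theorem~\ref{thm8}) and the argument above goes through verbatim. For $k=l$ I would treat the problem separately, either by reorganising the transfers so that $\#V(Y)\ge\#V(X)$ is maintained at every step, or by re-examining which caterpillar is genuinely extremal in that range; I expect this to be the most delicate point, and it is where I would concentrate the verification.
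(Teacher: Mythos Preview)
Your plan is correct for $k>l$ and is essentially the paper's own argument run in the opposite direction. The paper starts from $CP_n(x)$ and pushes pendant leaves \emph{towards the centre} by $\sigma$-transformations, sweeping $i$ from $1$ to $\lfloor d/2\rfloor$ (and symmetrically on the right): at stage $i$ it keeps exactly $y_i$ pendants at $u_i$ and moves the surplus $X_i-Y_i\ge 0$ to $u_{i+1}$, the central slack being absorbed by the equality clause of Lemma~\ref{lem3}. You instead start from $CP_n(y)$ and push leaves \emph{towards the ends} by $\sigma^{-1}$-transformations. These two schemes are formal inverses of one another. The paper's greedy sweep is considerably more economical than your transportation/induction on $N(x,y)$: once one observes that the partial-sum hypothesis $X_i\ge Y_i$ is precisely what guarantees a nonnegative surplus at every stage of the sweep, no further combinatorial bookkeeping is needed, and the ``reservoir'' accounting you set up becomes superfluous.

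Your concern about the Steiner--Wiener range $k=l$ is well placed, but it is not peculiar to the endward direction you chose. In the paper's centreward sweep the very first move at $u_1$ has $\#V(X)=x_1+1$ and $\#V(Y)=n-2-x_1$, so the size hypothesis $\#V(Y)\ge\#V(X)$ of Lemma~\ref{lem4} already fails whenever $x_1>(n-3)/2$. The paper simply invokes Theorem~\ref{thm1} without addressing this point. Thus you are not missing an ingredient that the paper supplies; the delicacy you isolate for $k=l$ is present in the argument as written, whichever direction one runs it.
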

\begin{proof}
For $CP_n(x)$, one can keep $y_i$ (or $y_{d-i}$) leaves attaching to $u_{i}$ (or $u_{d-i}$) and move extra leaves to $u_{i+1}$ (or $u_{d-i-1}$) by $\sigma$-transformation from $i=1$ to $\lfloor\frac{d}{2}\rfloor$ in turn.

Next, only vertices $u_{\lfloor\frac{d}{2}\rfloor}$ and $u_{\lceil\frac{d}{2}\rceil}$ need to be considered, where $d$ is odd. Recall the equality condition of $\sigma$-transformation. It follows that movements of leaves between $u_{\lfloor\frac{d}{2}\rfloor}$ and $u_{\lceil\frac{d}{2}\rceil}$ do not change the average Steiner $(k,l)$-eccentricity of the caterpillar. So we can switch $CP_n(x)$ to $CP_n(y)$ by using $\sigma$-transformation. By Theorem \ref{thm1}, $\bar{\varepsilon}_{k,l}(CP_n(x))\geq \bar{\varepsilon}_{k,l}(CP_n(y))$.
\end{proof}
\begin{corollary}\label{cor3}
	If $T$ is a caterpillar of order $n$ and $\operatorname{diam}(T)=d$, then 
	\begin{equation}\label{eq5.2}
		\bar{\varepsilon}_{k,l}(T_{n,d}^C)\leq\bar{\varepsilon}_{k,l}(T)\leq \bar{\varepsilon}_{k,l}(T_{n,d}^D).
	\end{equation}
\end{corollary}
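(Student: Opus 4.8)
The plan is to derive both inequalities from the preceding majorization theorem for caterpillars, together with the lemma stating that $\bar\varepsilon_{k,l}(T_{n,d}^C(s))$ and $\bar\varepsilon_{k,l}(D(n,s,n-d-1-s))$ are independent of the split parameter $s$. Write an arbitrary caterpillar of order $n$ and diameter $d$ as $CP_n(y)$ with $y=(y_1,\dots,y_{d-1})$ and $\sum_{i=1}^{d-1}y_i=n-d-1$ (the case $d=2$ is trivial since then $T_{n,2}^C=T_{n,2}^D=K_{1,n-1}$, so I may assume $d\ge 3$). It then suffices to exhibit, for the central tuple $c$ and a suitable double-comet tuple $D$, the relations $y\vartriangleright c$ and $D\vartriangleright y$, and to apply the preceding theorem to each.

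For the lower bound I would first observe that $T_{n,d}^C$ corresponds to a tuple $c$ whose entire mass sits at the central index $\tfrac{d}{2}$ (when $d$ is even) or at the two central indices $\tfrac{d-1}{2},\tfrac{d+1}{2}$ (when $d$ is odd). Consequently, for every $k$ with $1\le k<\lfloor d/2\rfloor$ both partial sums $\sum_{i=1}^k c_i$ and $\sum_{i=1}^k c_{d-i}$ vanish, because the central index (or indices) lies strictly outside the ranges $\{1,\dots,\lfloor d/2\rfloor-1\}$ and $\{d-\lfloor d/2\rfloor+1,\dots,d-1\}$ on which those partial sums are supported. Since $y_i\ge 0$, the defining inequalities of $\vartriangleright$ hold trivially, and as $\sum c_i=\sum y_i=n-d-1$ we obtain $y\vartriangleright c$; the preceding theorem then yields $\bar\varepsilon_{k,l}(CP_n(y))\ge\bar\varepsilon_{k,l}(T_{n,d}^C)$, where for $d$ odd the constancy lemma guarantees this value is independent of how the central mass is split.

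For the upper bound I would work with the double-comet tuple $D=(s,0,\dots,0,n-d-1-s)$, whose left partial sums $\sum_{i=1}^k D_i$ all equal $s$ and whose right partial sums $\sum_{i=1}^k D_{d-i}$ all equal $n-d-1-s$ throughout $1\le k\le d-2$. Hence $D\vartriangleright y$ collapses to the two conditions $s\ge L:=\sum_{i=1}^{\lfloor d/2\rfloor-1}y_i$ and $n-d-1-s\ge R:=\sum_{i=1}^{\lfloor d/2\rfloor-1}y_{d-i}$. The step I expect to require the most care is that a \emph{single} value of $s$ can meet both demands simultaneously; this is possible precisely because the index sets defining $L$ and $R$ are disjoint and together omit the central index (or the two central indices), so that $L+R\le n-d-1$ and the choice $s=L$ is admissible. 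This gives $D\vartriangleright y$, and the preceding theorem yields $\bar\varepsilon_{k,l}(D(n,s,n-d-1-s))\ge\bar\varepsilon_{k,l}(CP_n(y))$; invoking the constancy lemma to identify $D(n,s,\cdot)$ with the canonical representative $T_{n,d}^D$ completes $\bar\varepsilon_{k,l}(T)\le\bar\varepsilon_{k,l}(T_{n,d}^D)$. The whole argument hinges on $\vartriangleright$ imposing its prefix-sum inequalities only for $k<\lfloor d/2\rfloor$, which leaves the centre free and makes both the trivial domination in the lower bound and the simultaneous satisfiability in the upper bound work.
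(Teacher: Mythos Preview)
Your argument is correct. For the upper bound it is essentially the paper's own proof: the paper also exhibits an explicit double-comet tuple dominating $y$ under $\vartriangleright$, choosing $s=\sum_{i\le \lfloor d/2\rfloor}y_i$ (with a parity split for even/odd $d$), whereas you take $s=L=\sum_{i\le \lfloor d/2\rfloor-1}y_i$; both choices lie in the admissible interval $[L,\,n-d-1-R]$, and the constancy lemma makes the particular $s$ irrelevant.

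The only difference is in the lower bound. The paper does not re-derive it via $\vartriangleright$; it simply invokes Theorem~\ref{thm9}, which already gives $\bar\varepsilon_{k,l}(T)\ge \bar\varepsilon_{k,l}(T_{n,d}^C)$ for \emph{all} trees of diameter $d$, so the caterpillar case is immediate. Your route---showing $y\vartriangleright c$ because the prefix sums of the central tuple vanish for $k<\lfloor d/2\rfloor$---is a clean, self-contained alternative that stays entirely within the caterpillar majorization framework, at the cost of reproving a special case of something already established. Either way the corollary follows.
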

\begin{proof}
	The left part of \eqref{eq5.2} is the corollary of Theorem \ref{thm9}. Assume that $$T\cong CP_n(y_1,y_2,\ldots, y_{d-1}).$$ We can get the right part of \eqref{eq5.2}, since
	\begin{equation*}
		(y_1,y_2,\ldots, y_{d-1})\triangleleft\begin{cases}(\sum_{i=1}^{d/2}y_i,0,\ldots,0, \sum_{i=1}^{d/2-1}y_{d-i}),& d~\text{is even},\cr
		(\sum_{i=1}^{\lfloor d/2\rfloor}y_i,0,\ldots,0, \sum_{i=1}^{\lfloor d/2\rfloor}y_{d-i}),& d~\text{is odd}.
		\end{cases}
	\end{equation*}
	
\end{proof}
According to Theorem \ref{thm7} and Corollary \ref{cor3}, we have 
\begin{corollary}
If $T$ is a caterpillar of order $n$ and $\operatorname{diam}(T)=d$, then $B(n,n-d-1)$ is  the caterpillar with both maximum degree and maximum average Steiner $(k,l)$-eccentricity.
\end{corollary}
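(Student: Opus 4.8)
The plan is to assemble two results already in hand---the caterpillar upper bound of Corollary \ref{cor3} and the maximum-degree broom of Theorem \ref{thm7}---and to verify that the two extremal problems share a common solution. I would start with the degree side, which is purely combinatorial. Fix a spine $u_0u_1\cdots u_d$ of the caterpillar; it uses $d+1$ vertices, so the remaining $n-d-1$ vertices are all pendant leaves. None of them can hang at $u_0$ or $u_d$, since such a leaf would create a path of length $d+1$ and violate $\operatorname{diam}(T)=d$; hence every surplus leaf attaches to an internal vertex $u_i$ with $1\le i\le d-1$. Since $\deg(u_i)=2+(\text{number of leaves at }u_i)$, the degree is largest when all $n-d-1$ surplus leaves sit on a single internal vertex, giving maximum degree $2+(n-d-1)=n-d+1$. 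Thus the largest maximum degree attainable by a diameter-$d$ caterpillar of order $n$ is $n-d+1$, realized exactly by $CP_n(n-d-1,0,\ldots,0)$, i.e.\ by a broom.

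Next I would identify this broom with the double comet. By the lemma asserting that $\bar{\varepsilon}_{k,l}(D(n,s,n-d-1-s))$ is independent of $s$, I may take $s=n-d-1$, which places every surplus leaf on $u_1$ and produces precisely $CP_n(n-d-1,0,\ldots,0)$, whose branching vertex has degree $n-d+1$. As a broom is the starlike tree with a single long pendant path, this caterpillar is exactly $B(n,n-d+1)$, whose diameter equals $n-\Delta+1=d$. Hence the diameter-$d$ broom of maximum degree is one representative of $T_{n,d}^D$.

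Finally I would invoke Corollary \ref{cor3}: for every caterpillar $T$ of order $n$ with $\operatorname{diam}(T)=d$ one has $\bar{\varepsilon}_{k,l}(T)\le\bar{\varepsilon}_{k,l}(T_{n,d}^D)$. Combined with the previous step, $T_{n,d}^D\cong B(n,n-d+1)$ attains the maximum average Steiner $(k,l)$-eccentricity, and it is the very tree that maximizes the degree. Therefore $B(n,n-d+1)$ is the caterpillar of order $n$ and diameter $d$ with both maximum degree and maximum average Steiner $(k,l)$-eccentricity.

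Because the statement merely bundles Corollary \ref{cor3}, the degree count, and the $s$-invariance lemma, I expect no genuine analytic obstacle. The one point that deserves care is conceptual rather than computational: maximizing the degree and maximizing the average eccentricity are a priori two distinct optimizations over diameter-$d$ caterpillars, and the content of the corollary is precisely that their optimizers coincide. The $s$-invariance lemma is exactly what closes this gap, letting me slide the double comet---the average-eccentricity maximizer---into the concentrated configuration that is the degree maximizer.
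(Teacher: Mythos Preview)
Your argument is correct and is precisely the intended route: the paper offers no proof beyond the single line ``According to Theorem \ref{thm7} and Corollary \ref{cor3}, we have'', and your unpacking via Corollary \ref{cor3} together with the $s$-invariance lemma (to slide $T_{n,d}^D$ into the concentrated broom configuration) is exactly how that citation is meant to be read. You have also caught a genuine typo in the statement: since $\operatorname{diam}(B(n,\Delta))=n-\Delta+1$, the diameter-$d$ broom is $B(n,n-d+1)$, not $B(n,n-d-1)$ as printed; your computation of the maximum degree $n-d+1$ confirms this.
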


Since $SW_k(G)=\binom{n}{k}\bar\varepsilon_{k,k}(G)$, some of our conclusions for the average Steiner $(k,l)$-eccentricity also hold for the Steiner $k$-Wiener index.
\begin{theorem}
	Assume that $T$ is a tree of order $n$. We have some inequalities about  the average Steiner $(k,l)$-eccentricity of $T$:
	\begin{enumerate}
		\item $SW_k(K_{1,n-1})\leq SW_k(T)\leq SW_k(P_n)$;
		\item If the maximum degree of $T$ is $\Delta$, then $SW_k(T)\leq SW_k(B(n,\Delta))$;
		\item If $\#\ell(T)=p$, then $SW_k(BS(n,p))\leq SW_k(T)$;
		\item If $\operatorname{diam}(T)=d$, then $SW_k(T_{n,d}^c)\leq SW_k(T)$.
	\end{enumerate}
\end{theorem}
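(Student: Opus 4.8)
The plan is to deduce all four inequalities from the already-established monotonicity results for the average Steiner $(k,l)$-eccentricity, using only the identity $SW_k(G)=\binom{n}{k}\,\bar\varepsilon_{k,k}(G)$ recorded immediately before the statement. The governing observation is that $SW_k$ is merely a fixed positive multiple of $\bar\varepsilon_{k,l}$ evaluated at the diagonal value $l=k$: since the scaling factor $\binom{n}{k}$ is one and the same positive constant for every tree of order $n$ under comparison, an inequality $\bar\varepsilon_{k,k}(G_1)\leq\bar\varepsilon_{k,k}(G_2)$ is equivalent to $SW_k(G_1)\leq SW_k(G_2)$. Thus each claim collapses to the corresponding statement about $\bar\varepsilon_{k,l}$ in the special case $l=k$, with no estimation left to do.

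Concretely, I would set $l=k$ in the relevant extremal theorems and multiply through by $\binom{n}{k}$: part (1) follows from the order-only bounds $\bar\varepsilon_{k,l}(K_{1,n-1})\leq\bar\varepsilon_{k,l}(T)\leq\bar\varepsilon_{k,l}(P_n)$ established at the start of Section~\ref{sec5}; part (2) from Theorem~\ref{thm7}; part (3) from Theorem~\ref{thm5}; and part (4) from Theorem~\ref{thm9}. Each multiplication preserves the direction of the inequality because $\binom{n}{k}>0$, so the four displayed Wiener inequalities drop out at once.

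The one point demanding care is verifying that every invoked theorem is genuinely valid at $l=k$, since the diagonal case is not automatically covered by the hypotheses. Here the structure of the earlier sections is exactly what makes the argument go through: the $\sigma$-transformation monotonicity was proved separately for $k>l$ (Lemma~\ref{lem3}) and for the diagonal $l=k$ (Lemma~\ref{lem4}), and these were combined into Theorem~\ref{thm1} under $n>k\geq l$; similarly the $(p,q)$-transformation was handled for $k>l$ (Lemma~\ref{lem5}) and for $l=k$ (Lemma~\ref{lem6}) before being assembled in Theorem~\ref{thm3}. Consequently Theorems~\ref{thm5}, \ref{thm7}, \ref{thm9} and the order-only bounds all remain in force at $l=k$, so their specializations are legitimate. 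I would also flag that Theorem~\ref{thm8}, the leaf-based \emph{upper} bound by the broom $B(n,p)$, carries the extra hypothesis $k>l$ and hence does \emph{not} specialize to the Wiener setting; this is precisely why part (3) records only the leaf-based lower bound via $BS(n,p)$ and the broom upper bound is instead invoked under the maximum-degree condition in part (2). The main obstacle, therefore, is not any computation but this bookkeeping check that the diagonal $l=k$ lies within the scope of each cited result.
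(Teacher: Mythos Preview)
Your proposal is correct and follows essentially the paper's own route: specialize the $\bar\varepsilon_{k,l}$ extremal results to $l=k$ and multiply by $\binom{n}{k}$, invoking exactly Theorems~\ref{thm5}, \ref{thm7}, \ref{thm9} and the order-only bound for the four parts. The paper's proof is terser---it attributes (1) and (4) to prior literature while noting they can be reproved by its methods, and simply cites the relevant theorems for (2) and (3)---whereas you additionally spell out why the diagonal $l=k$ is within scope (via Lemmas~\ref{lem4} and~\ref{lem6}) and correctly observe that Theorem~\ref{thm8} does not specialize; this extra bookkeeping is a welcome clarification, not a departure.
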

\begin{proof}
(1) and (4) are the conclusions of \cite[Theorem 3.3]{li2016steiner} and \cite[Theorem 3.1]{lu2018sharp}, respectively, and can be reproved by the methods in this paper. (2) and (3) are corollaries of Theorems \ref{thm5}, \ref{thm7}, \ref{thm9} .
\end{proof}

\section*{Acknowledgments}
The research is partially supported by the Natural Science Foundation of China (No. 12301107) and the Natural Science Foundation of Shandong Province, China (No. ZR202209010046).
\bibliographystyle{unsrt}
\bibliography{ref}
\end{document}